\begin{document}

\pagenumbering{arabic}
\newcounter{comp1}

\newtheorem{definition}{Definition}
\newtheorem{proposition}{Proposition}
\newtheorem{example}{Example}
\newtheorem{method}{Method}
\newtheorem{lemma}{Lemma}
\newtheorem{theorem}{Theorem}
\newtheorem{corollary}{Corollary}
\newtheorem{assumption}{Assumption}
\newtheorem{algorithm}{Algorithm}
\newtheorem{remark}{Remark}
\newcommand{\fig}[1]{\begin{figure}[hbt]
                  \vspace{1cm}
                  \begin{center}
                  \begin{picture}(15,10)(0,0)
                  \put(0,0){\line(1,0){15}}
                  \put(0,0){\line(0,1){10}}
                  \put(15,0){\line(0,1){10}}
                  \put(0,10){\line(1,0){15}}
                  \end{picture}
                  \end{center}
                  \vspace{.3cm}
                  \caption{#1}
                  \vspace{.5cm}
                  \end{figure}}
\newcommand{\tabincell}[2]{\begin{tabular}{@{}#1@{}}#2\end{tabular}}%
\newcommand{\Axk}{A(x^k)}
\newcommand{\Aumb}{\sum_{i=1}^{N}A_{i}u_{i}-b}
\newcommand{\Kk}{K^k}
\newcommand{\Kki}{K_{i}^{k}}
\newcommand{\Aukmb}{\sum_{i=1}^{N}A_{i}u_{i}^{k}-b}
\newcommand{\Au}{\sum_{i=1}^{N}A_{i}u_{i}}
\newcommand{\Aukpmb}{\sum_{i=1}^{N}A_{i}u_{i}^{k+1}-b}
\newcommand{\nab}{\nabla^2 f(x^k)}
\newcommand{\xk}{x^k}
\newcommand{\ubk}{\overline{u}^k}
\newcommand{\uhk}{\hat u^k}
\def\QEDclosed{\mbox{\rule[0pt]{1.3ex}{1.3ex}}} % 定义实心符
\def\QEDopen{{\setlength{\fboxsep}{0pt}\setlength{\fboxrule}{0.2pt}\fbox{\rule[0pt]{0pt}{1.3ex}\rule[0pt]{1.3ex}{0pt}}}} % 定义空心符
\def\QED{\QEDopen} % 选填\QEDclosed 得到实心
\def\proof{\par\noindent{\em Proof.}}
\def\endproof{\hfill $\Box$ \vskip 0.4cm}
\newcommand{\RR}{\mathbf R}

\title {\bf
Auxiliary Problem Principle of augmented Lagrangian with Varying Core Functions for Large-Scale Structured Convex Problems
             }
\author{Lei Zhao\thanks {Antai College of Economics and Management and Sino-US Global Logistics Institute, Shanghai Jiao Tong University, 200030 Shanghai, China({\tt l.zhao@sjtu.edu.cn})}, Daoli Zhu\thanks {Antai College of Economics and Management and Sino-US Global Logistics Institute, Shanghai Jiao Tong University, 200030 Shanghai, China({\tt dlzhu@sjtu.edu.cn})}, Bo Jiang\thanks {Research Center for Management Science and Data Analytics, School of Information Management and Engineering,
Shanghai University of Finance and Economics, Shanghai 200433, China({\tt isyebojiang@gmail.com})}}
\maketitle

\begin{abstract}
The auxiliary problem principle of augmented Lagrangian (APP-AL), proposed by Cohen and Zhu (1984), aims to find the solution of a constrained optimization problem through a sequence of auxiliary problems involving augmented Lagrangian.
The merits of this approach are two folds. First, the core function is usually separable, which makes the subproblems at each step decomposable and particularly attractive for parallel computing. Second, the choice
of the core function is quite flexible. Consequently, by carefully specifying this function,
APP-AL may reduce to
some standard optimization algorithms.
In this paper, we pursue enhancing such flexibility by allowing the core function to be non-identical at each step of the algorithm, and name it
varying auxiliary problem principle (VAPP-AL). Depending on the problem structure, the varying core functions in VAPP-AL can be adapted to design new flexible and suitable algorithm for parallel and distributed computing. The convergence and $O(1/t)$ convergence rate of VAPP-AL for convex problem with coupling objective and constraints is proved. Moreover, if this function is specialized to be quadratic, an $o(1/t)$ convergence rate can be established. Interestingly, the new VAPP framework can cover several
variants of Jacobian type augmented Lagrangian decomposition methods as special cases. Furthermore, our technique works for the convex problem with nonseparable objective and multi-blocks coupled linear constraints, which usually can not be handled by ADMM.

\vspace{1cm}

\noindent {\bf Keywords:} Auxiliary Problem Principle, Varying Core Function, Convergence Rate, Parallel and Distributed
Computing.

%\vspace{0.5cm}
%
%\noindent {\bf Mathematics Subject Classification:}

\end{abstract}
%La m\'{e}thode du probl\`{e}me auxiliaire permet d'obtenir la solution
%de probl\`{e}mes de minimisation par la r\'{e}solution d'une suite
%de probl\`{e}mes plus simples.
%Nous g\'{e}n\'{e}ralisons cette m\'{e}thode en la couplant
%avec une recherche unidimensionnelle (``linesearch'').
%Nous proposons un algorithme fondamental et \'{e}tudions
%ses propri\'{e}t\'{e}s de convergence
%pour la minimisation de fonctionnelles pseudoconvexes
%en dimension infinie. Nous introduisons \'{e}galement la m\'{e}thode
%du probl\`{e}me auxiliaire partiel de descente qui ne
%lin\'{e}arise que l'objectif et n'introduit qu'un
%sous-ensemble des variables dans le terme auxiliaire.
\normalsize
\newpage
\vspace{1cm}
\section{Introduction}
In this paper, we consider the following block structured convex minimization problem with nonseparable objective and coupled linear equality constraints:
\begin{equation}\label{Prob:general-function}
\begin{array}{lll}
 \mbox{(P):} & \min     & (G+J)(u)=G(u_1,u_2,\cdots,u_N)+\sum_{i=1}^{N}J_{i}(u_{i})      \\
             & \rm {s.t}& Au=\sum_{i=1}^{N}A_{i}u_{i}=b \\
             &          & u_{i}\in U_{i},\quad i=1,2,\ldots,N.
\end{array}
\end{equation}
\noindent where $G$ is a convex smooth function on $U\subset \RR^n$, $U=U_1\times\cdots\times U_{N}$ each $J_{i}$ is a convex but possibly nonsmooth function on $U_{i}\subset \RR^{n_{i}}$, and $A=(A_{1},A_{2},\cdots,A_{N})\in \RR^{m\times n}$ is an appropriate partition of matrix $A$ and $A_{i}$ is an $m\times n_{i}$ matrix, $b\in \RR^{m}$ is a vector. When the coupling term $G$ is absent from the objective, problem (P) become to a separable programming where the objective functions and constraint functions can be expressed as the sum of the each involving one block variable.
\begin{equation}
\begin{array}{lll}
 \mbox{(SP):}  &\min     & \sum_{i=1}^{N}J_{i}(u_{i})      \\
               &\rm {s.t}& Au=\sum_{i=1}^{N}A_{i}u_{i}=b \\
               &         & u_{i}\in U_{i},\quad i=1,2,\ldots,N
\end{array}
\end{equation}
Four decades ago, Hestenes~\cite{Hestenes1969} and Powell~\cite{Powell1969} introduced augmented Lagrangian methods (ALM), also named as multipliers method, for equality constrained nonlinear program. Theoretical properties of augmented Lagrangian duality method was investigated by Rockafeller~\cite{Rock76}. ALM for problem (P) are based on its associated augmented Lagrangian function:
\begin{equation}\label{func:AL}
L_{\gamma}(u,p)=(G+J)(u)+\langle p,Au-b\rangle+\frac{\gamma}{2}\|Au-b\|^{2},\quad \mbox{with}\quad J(u)=\sum_{i=1}^{N}J_{i}(u_{i}), Au=\sum_{i=1}^{N}A_iu_i.
\end{equation}
In particular, it can be described as follows.\\
\noindent\rule[0.25\baselineskip]{\textwidth}{1.5pt}
{\bf Augmented Lagrangian method (ALM)}\\
\noindent\rule[0.25\baselineskip]{\textwidth}{0.5pt}
{Initialize} $u^0 \in U$ and $p^0\in \mathbf{R^m}$  \\
 \textbf{for} $k = 0,1,\cdots $, \textbf{do}
\begin{eqnarray}
u^{k+1}&\leftarrow&\min_{u\in U}L_\gamma (u,p^k);\label{UZAWA_primal}\\
p^{k+1}&\leftarrow&p^{k}+\rho (Au^{k+1}-b).\label{UZAWA_dual}
\end{eqnarray}
\textbf{end for}\\
\noindent\rule[0.25\baselineskip]{\textwidth}{1.5pt}
Note the variables $u_i, i=1,\cdots, N$ are coupled in the nonlinear function $G(u)$ and quadratic term $\frac{\gamma}{2}\|\sum_{i=1}^{N}A_iu_i-b\|^2$ , hence the augmented Lagrangian function $L_{\gamma}$ is nonseparable.
In the following, we shall review a few approaches that can overcome this challenge, so that the primal problem~\eqref{UZAWA_primal} of ALM can break into the $N$ independent subproblems of $u_i$ which is easy to solve.
\begin{itemize}
\item{} {\bf Coupling linearization and regularization methods}\\
Cohen and Zhu~\cite{CohenZ} and Zhu~\cite{Zhu83} proposed Auxiliary problem principle of augmented Lagrangian (APP-AL) to solve (P):\\
\noindent\rule[0.25\baselineskip]{445pt}{1.5pt}
{\bf Auxiliary problem principle of augmented Lagrangian method (APP-AL)}\\
\noindent\rule[0.25\baselineskip]{445pt}{0.5pt}
{Initialize} $u^0 \in U$ and $p^0\in\mathbf{R}^m$  \\
 \textbf{for} $k = 0,1,\cdots $, \textbf{do}
\begin{eqnarray}
u^{k+1}&\leftarrow&\min_{u\in U}\langle \nabla G(u^{k}), u \rangle + J(u)
+ \langle p^{k}+\gamma(Au^{k}-b), Au\rangle \nonumber\\
&&+\frac{1}{\epsilon}[K(u)- \langle \nabla K(u^{k}), u \rangle];\label{APP_primal}\\
p^{k+1}&\leftarrow&p^{k}+\rho (Au^{k+1}-b).\label{APP_dual}
\end{eqnarray}
\textbf{end for}\\
\noindent\rule[0.25\baselineskip]{445pt}{1.5pt}
In APP-AL algorithm, a core function $K(u)$ is introduced. The objective function of ~\eqref{APP_primal} is obtained by  keeping the separate part $J(u)$,
linearizing the coupling part $G(u)$ as well as the quadratic term $\frac{\gamma}{2}\|\sum_{i=1}^{N}A_iu_i-b\|^2$ in augmented Lagrangian and adding a regularization term $\frac{1}{\epsilon}[K(u)-K(u^k)-\langle\nabla K(u^k),u\rangle]$ which is essentially the Bregman distance function. If the core function $K$ is separable, i.e., $K(u)=\sum_{i=1}^{N}K_i(u_i)$, so as the subproblem~\eqref{APP_primal}.
Thanks to this decomposable property, excellent numerical performance can be achieved and APP-AL has become the main theoretical basis of some parallel computing software such as DistOpt~\cite{Distopt1, Distopt2}. Besides, APP also has wide applications in engineering systems, such as power systems~\cite{Electro93,Electro97}, multiple-robot systems~\cite{CelaHamam92}. In particular, this approach was adopted by Kim and Baldick to parallelize optimal power flow in very large interconnected power systems~\cite{Electro97,KimBaldick00}; \c{C}ela and Hamam applied this method to solve the optimal control problem of multiple-robot systems in the presence of obstacles~\cite{CelaHamam92}. The main advantage of coupling linearization and regularization methods is that it leads to decompositions on both objective and constraint, and is thus suitable for parallel implementation. Chen and Teboulle~\cite{ChenTeboulle94} also proposed linearied proximal method of multipliers LPMM to solve SP with two blocks and linear convergence rate of this algorithm has been proved. Note that LPMM is exactly
APP-AL for (SP) by taking $K(u)=\frac{\|u\|^2}{2}$.
\item{}{\bf Gausse-Seidel type method}\\
Fortin M. and Glowinski~\cite{FortinGlowinski} and Glowinski~\cite{Glowinski84} consider the following (SP) with two blocks
\begin{equation}
\begin{array}{lll}
 \mbox{(SP$_2$):}  &\min\limits_{u\in U,v\in \mathbf{V}}     & F(u)+G(v)      \\
                   &\rm {s.t}                       & Bu-v=0
\end{array}
\end{equation}
and proposed ALG$_2$ which is also called Alternating direction method of multiplier (ADMM).
\noindent\rule[0.25\baselineskip]{445pt}{1.5pt}
{\bf ALG$_2$, Alternating direction method of multiplier (ADMM)}\\
\noindent\rule[0.25\baselineskip]{445pt}{0.5pt}
{Initialize} $u^0 \in U$ and $p^0\in \mathbf{R^m}$  \\
 \textbf{for} $k = 0,1,\cdots $, \textbf{do}
\begin{eqnarray}
u^{k+1}&\leftarrow&\min_{u\in U}F(u)+\langle p^{k}, Bu\rangle +\frac{\gamma}{2}\|Bu-v^k\|^2;\label{ADMM_u}\\
v^{k+1}&\leftarrow&\min_{v\in \mathbf{V}} G(v)-\langle p^{k}, v\rangle +\frac{\gamma}{2}\|Bu^k-v\|^2;\label{ADMM_v}\\
p^{k+1}&\leftarrow&p^{k}+\gamma (Bu^{k+1}-v^{k+1}).\label{ADMM_dual}
\end{eqnarray}
\textbf{end for}\\
\noindent\rule[0.25\baselineskip]{445pt}{1.5pt}
This method apply the Gausse-Seidel like approach to overcome the coupling quadratic term $\frac{\gamma}{2}\|Bu-v\|^2$; thus allowing for decomposition of the minimization in $u$ and $v$ respectively. Noted that in the ADMM we need to assume that $B$ has full column rank. Shefi and Teboulle~\cite{ShefiTeboulle14} proposed Alternating direction proximal method of multipliers (Algorithm 2, AD-PMM) without assumption of matrix. Chen et al.~\cite{ChenHeYeYuan13} shown a coutre example indicating that the multi-blocks ADMM ($N\geq 3$) may divergence to problem (SP). Some works to investigate further sufficient conditions on the problem which can guarantees convergence for the multi-blocks ADMM. The existing results typically required the strongly convexity for function $J_i(u_i)$ in the objective. Noted that in this situation, one can directly use Lagrange method rather than augmented Lagrangian method. Recently,~\cite{CuiLiSunToh-2015} proposed the perturbation modified ADMM to get $\varepsilon$-optimization do not require strongly convexity of the objective function. \cite{LiMoYuanZhang14} investigated the algorithm of mixture of ADMM and coupling linearization and regularization to solve problem (P) with two blocks. In their works, the coupling function $G$ is linearized, Gausse-Seidel step are used to decouply the quadratic term in the augmented Lagrangian function.\\
Since 2007, there was a resurgence of multiplier method, it renewed interest in ADMM has emerge from new applications arising in signal and image processing, machine learning, and other fields, which often modeled by structured and very large scale convex optimization problems (P) or (SP), See Boyd et al.'s excellent review paper~\cite{Boyd2011}.
\item{}{\bf Jacobian type method}\\
Jacobian is another scheme to overcome the difficulty of coupling quadratic term of multiplier method for problem (SP). The advantage of the Jacobian type methods is all the decomposed subproblems are eligible for parallel computation at each iteration as the linearization-regularization approach. To guarantee the convergence of full Jacobian decomposition, He~\cite{HeHouYuan13} take a relaxation step at every iteration and assume the matrix $A_i$ have full column-rank, the algorithm and convergence rate $O(1/t)$. Deng~\cite{DengLaiPengYin-2014} assume that $A_i$ are near orthogonal, and have full column-rank, and proposed to add proximal terms of different kinds to subproblems to guarantee the algorithm convergence globally at a rate of $o(1/t)$.
\end{itemize}
The goals of this paper is outlined as following. We extend the classical APP-AL decompositions of Cohen and Zhu~\cite{CohenZ} to accommodate the varying core functions in each iteration and is referred to as VAPP-AL. This approach can serve as a framework of decomposition algorithm to solve generic convex optimization with nonseparable objective and multi-blocks linear equality constraints. Depending on the problem structure, the varying core functions in VAPP-AL can be adapted to design new flexible and suitable algorithm for parallel and distributed computing. (See the examples in Section~\ref{sec:example}, various linearization/regularization, Gausse-Seidel, Jaccobian or Mixture type algorithms are introduced to handle practical problems). In this work, global convergence and iteration complexity of VAPP-AL are discussed as well.\\
The rest of this paper is organized as follows. We start Section 2 with some
notations and assumptions that we make in this paper. After that, VAPP-AL framework is officially proposed followed. Then we show how to specify this framework to solve
numerous practical problems. In Section 3 we prove the global convergence for VAPP-AL. Then the $O(1/t)$ convergence rate is established in Section 4 both in ergodic and non-ergodic sense. In Section 5,
we show that the convergence rate of VAPP-AL can be improved to $o(1/t)$ when the core function is specialized to be
quadratic. Finally, we end our paper with some conclusions.

\section{Varying Auxiliary Problem Principle of augmented Lagrangian (VAPP-AL)}
\subsection{Preliminaries}
In this paper, we let
$$
u := \left(\!\! \begin{array}{c} u_1 \\ \vdots \\ u_N \end{array}\!\! \right) \in \RR^n,\;U=U_1 \times \cdots \times U_N\;\mbox{and}\;A=[A_1,\cdots, A_N] \in \RR^{m \times n},\;\mbox{where}\;n = \sum_{i=1}^{N}n_i.
$$
We denote $\langle \cdot \rangle $ and $\| \cdot \|$ as the inner product and Euclidean norm of vector, respectively. For a matrix $B \in \RR^{\ell \times \ell}$,  $\|B\|$ strands for the spectral norm,
which is the largest singular value of $B$. We use $\lambda_{\max}(B)$ and $\lambda_{\min}(B)$ to denote the maximum and minimum eigenvalue of $B$ respectively.

For a positive semi-definite matrix $Q \in \RR^{\ell \times \ell}$, the semi-norm associated with $Q$ is denoted by $\|\cdot\|_Q$. In particular, for any vector $x$,
$\|x\|_Q = \sqrt{x^T Q x}$.

\noindent Throughout this paper, we make the following standard assumptions:
\begin{assumption}\label{assump1}
{\rm
\noindent \begin{itemize}
\item[(i)] $J$ is a convex, l.s.c function (not necessary differentiable) such that $\mathbf{dom}J\cap U\neq \emptyset$.
\item[(ii)] $G$ is a convex and differentiable with its derivative Lipschitz of constant $B_{G}$.
\item[(iii)] $G+J$ is coercive on $U$, if $U$ is not bounded, that is
\begin{eqnarray*}
\forall \{u^{k}|k\in N\}\subset U, \lim_{k\rightarrow+\infty}\|u^{k}\|=+\infty\Rightarrow\lim_{k\rightarrow+\infty}(G+J)(u^{k})=+\infty.
\end{eqnarray*}
\item[(iv)]\begin{eqnarray}0\in \mbox{interior of } \big{(}A(U)-b\big{)}. \label{zero-inpoint}\end{eqnarray}
\end{itemize}
}
\end{assumption}
The Lagrangian function of problem~\eqref{Prob:general-function} is given by
$$
L(u,p)=(G+J)(u)+\langle p,Au-b\rangle.
$$
The pair $(u^{*},p^{*})\in U\times R^{m}$ is called a saddle point of Lagrangian function $L(u,p)$ if it holds that
\begin{equation}
L(u^{*},p)\leq L(u^{*},p^{*})\leq L(u,p^{*}),\; \forall u\in U,\; \forall p\in \RR^{m}. \label{eq:3}
\end{equation}
The saddle point optimality condition for problem (P) is given by
\begin{theorem}
A solution $(u^*,p^*)$ with $u^*\in U$ and $p^*\in \mathbf{R}^m$ is a saddle point for the Lagrangian function $L(u,p)$ if and only if
\rm{
\begin{itemize}
\item[(i)] $L(u^*,p^*)=\min_{u\in U}L(u,p^*)$;
\item[(ii)] $Au^*-b=0$;
\item[(iii)] $\langle p^*, Au^*-b\rangle=0$.
\end{itemize}
}
\end{theorem}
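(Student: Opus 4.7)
The plan is to prove the biconditional in two directions by direct manipulation of the two inequalities that define the saddle point~\eqref{eq:3}. The key observation making this essentially mechanical is that $L(u,p) = (G+J)(u) + \langle p, Au-b\rangle$ is affine in $p$ for fixed $u$; so the $p$-side of the saddle inequality is completely controlled by the linear term $\langle p, Au^*-b\rangle$, while the $u$-side is just the statement that $u^*$ minimizes $L(\cdot, p^*)$ on $U$.

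For the forward direction ($\Rightarrow$), I would assume $(u^*,p^*)$ satisfies $L(u^*,p) \leq L(u^*,p^*) \leq L(u,p^*)$ for all $u \in U$, $p \in \RR^m$. The right inequality is literally condition (i). For the left inequality, I would subtract $L(u^*,p^*)$ from $L(u^*,p)$ to obtain $\langle p-p^*, Au^*-b\rangle \leq 0$, and then exploit the fact that $p$ ranges over the whole of $\RR^m$: setting $q:=p-p^*$ and letting $q$ take the values $\pm(Au^*-b)$ forces $\|Au^*-b\|^2 \leq 0$, hence $Au^*-b=0$, which is (ii). Condition (iii) is then an immediate consequence, $\langle p^*, Au^*-b\rangle = \langle p^*, 0\rangle = 0$.

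For the reverse direction ($\Leftarrow$), I would assume (i), (ii), (iii) and recover both saddle-point inequalities. The right inequality $L(u^*,p^*) \leq L(u,p^*)$ for every $u \in U$ is exactly condition (i). For the left inequality, (ii) makes $\langle p-p^*, Au^*-b\rangle$ vanish identically in $p$, so in fact $L(u^*,p) = L(u^*,p^*)$ for every $p \in \RR^m$, which certainly implies $L(u^*,p) \leq L(u^*,p^*)$.

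I do not expect any real obstacle: the argument is a rewriting exercise built on the linearity of the Lagrangian in $p$ and the freedom to choose $p$ arbitrarily in $\RR^m$. The only subtle point worth noting in the write-up is that (iii) is redundant given (ii); it is included among the three conditions to mirror the complementary-slackness item that would be nontrivial if inequality constraints $Au-b \leq 0$ were present in place of the equality $Au-b=0$, and to match the standard KKT presentation used later in the paper.
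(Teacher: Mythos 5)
Your argument is correct and complete: the $u$-side of the saddle inequality is literally condition (i), and choosing $p-p^*=\pm(Au^*-b)$ in the $p$-side forces $Au^*-b=0$, after which (iii) and the reverse direction are immediate. The paper itself states this theorem without proof (deferring to the standard Lagrangian duality references of Bertsekas and Cohen--Zhu), so there is nothing to compare against; your write-up, including the observation that (iii) is redundant given (ii) for equality constraints, is the standard argument and is sound.
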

It is easy to get the VI reformulation:
\begin{eqnarray}
 && (G+J)(u)-(G+J)(u^*)+\langle p^*, A(u-u^*)\rangle\geq 0, \forall u\in U  \label{VIS_1}    \\
 && \langle Au^*-b,p-p^*\rangle\leq 0, \forall p\in R^m \label{VIS_2}
\end{eqnarray}
\noindent Note that (iii) and (iv) in Assumption \ref{assump1} are imposed to guarantee the existence of saddle point of $L$ on $U\times \RR^{m}$.
Moreover, under Assumption \ref{assump1}, $L$ and $L_{\gamma}$ share the same sets of saddle points.
The interested readers are referred to the book of Bertsekas~\cite{Bert} and the paper of Cohen and Zhu~\cite{CohenZ} for more results on Lagrangian duality theory.

%\noindent The advantage of augmented Lagrangian is its dual function $\Psi_{\gamma}$ is concave and differentiable, derivative is given by
%\begin{eqnarray*}
%\Psi_{\gamma}(p)=Au(p)-b, \mbox{where}\quad u(p)\in \hat{U}(p)\quad \mbox{is any minimizer of (\ref{eq:4})}
%\end{eqnarray*}
%\noindent Another advantage of augmented Lagrangian is the stability of $L_{\gamma}$, i.e.,
%\begin{eqnarray*}
%\forall p^{*}\in P^{*}, \hat{U}(p^{*})\doteq \arg\min_{u\in U} L_{\gamma}(u,p)=U^{*}.
%\end{eqnarray*}
%\medskip
\subsection{VAPP-AL method for (P)}
Based on the augmented Lagrangian theory, in this section we propose new primal-dual parallel algorithm to solve (P).
To extend the augmented Lagrangian decomposition method of Cohen and Zhu~\cite{CohenZ}, we introduce the core function $K^k(\cdot)$ that varies in different iterations and  satisfies the following assumption:
\begin{assumption}\label{assump2}
$\Kk(\cdot)$ is strongly convex with parameter $\beta^{k}$ and differentiable with its gradient Lipschitz continuous with parameter $B^{k}$ on $U$. Moreover, there exist positive numbers $\beta$ and $B$ such that $0<\beta\leq \beta^{k}\le B^{k} \le B$, for any $k\in \mathbb{N}$.
\end{assumption}
When the above assumption holds, function $D_K^k(u,v)=K^k(u)-K^k(v)-\langle \nabla K^k(v),u-v\rangle$ is a distance like function which has uniform lower and upper bound: $\frac{\beta}{2}\|u-v\|^2\leq D_K^k(u,v)\leq\frac{B}{2}\|u-v\|^2$.
Then for given $u^k$, we obtain the approximation of augmented Lagrangian $L_\gamma(u,p)=(G+J)(u)+\langle p, Au-b\rangle+\frac{\gamma}{2}\|Au-b\|^2$ as follows:
\begin{eqnarray*}
\tilde{L}_\gamma(u,p)=G(u^k)+\langle \nabla G(u^k),u-u^k\rangle+J(u)+\langle p, Au-b\rangle+\frac{\gamma}{2}\|Au^k-b\|^2+\gamma\langle Au^k-b, A(u-u^k)\rangle
+\frac{1}{\epsilon}D_K^k(u,u^k).
\end{eqnarray*}
Now we are ready to present VAPP-AL for (P):\\
\noindent\rule[0.25\baselineskip]{\textwidth}{1.5pt}
{\bf Varying Auxiliary Problem Principle (VAPP-AL)}\\
\noindent\rule[0.25\baselineskip]{\textwidth}{0.5pt}
{Initialize} $u^0 \in U$ and $p^0\in \mathbf{R}^m$  \\
 \textbf{for} $k = 0,1,\cdots $, \textbf{do}
\begin{eqnarray}
u^{k+1}&\leftarrow&\min_{u\in U}\langle\nabla G(u^{k}), u \rangle + J(u)+ \langle p^k+\gamma(Au^k-b), Au\rangle+\frac{1}{\epsilon}D_K^k(u,u^k);\label{primal}\\
p^{k+1}&\leftarrow&p^{k}+\rho (Au^{k+1}-b).\label{dual}
\end{eqnarray}
\textbf{end for}\\
\noindent\rule[0.25\baselineskip]{\textwidth}{1.5pt}
Note that problem \eqref{primal} is essentially equivalent to
\begin{eqnarray}
\mbox{(AP$^k$)}\qquad\min_{u\in U}\langle\nabla G(u^{k}), u \rangle + J(u)+ \langle p^k+\gamma(Au^k-b), Au\rangle+\frac{1}{\epsilon}\big{[}K^k(u)-\langle\nabla K^k(u^k),u\rangle\big{]}.\label{eq:APk}
\end{eqnarray}
\subsection{Various Implementations of VAPP-AL and Applications to Practical Problem}\label{sec:example}
The core function $K^k$ plays an important role in VAPP-AL. Depending on the structure of the problem, $K^k$ can be constructed properly to make subproblems decomposable and thus easy to solve. Typically,
$K^k$ is composed by three parts: (i) a variant of coupled function $G(u)$ in the objective; (ii) a variant of quadratic penalty on the linear coupled constraint $\frac{\gamma}{2}\|\sum_{i=1}^NA_iu_i-b\|^2$; (iii) a quadratic regularized term $\frac{1}{2}\sum_i^{N} \| u_i\|^2_{M_i}$.
Table~\ref{table1} provides some options for the three parts that constitute the core function. Hence any combination of these choices could lead to a concrete form of the core function.

\begin{table}[!htp]
\begin{center}
\begin{tabular}{|p{6.5cm}|p{9.2cm}|}
\hline
\tabincell{c}{Specific choices for $G(u)$}&
\tabincell{l}{(i) Gausse-Seidel Type\\ \hspace{0.5cm}$\sum_{i=1}^{N}G(u_1^{k+1},\cdots,u_{i-1}^{k+1},u_i,u_{i+1}^k,\cdots,u_N^k)$\\(ii) Jacobian Type\\ \hspace{0.5cm}$\sum_{i=1}^{N}G(u_1^{k},\cdots,u_{i-1}^{k},u_{i},u_{i+1}^{k},\cdots,u_{N}^{k})$}\\
\hline
\tabincell{l}{Specific choices for $\frac{\gamma}{2}\|\sum_{i=1}^NA_iu_i-b\|^2$} &
\tabincell{l}{(i) Gausse-Seidel Type\\ \hspace{0.5cm}$\frac{\gamma}{2}\sum_{i=1}^{N}\|A_iu_i+\sum\limits_{j<i}A_ju_j^{k+1}+\sum\limits_{j>i}A_ju_j^k-b\|^2$\\(ii) Jacobian Type\\ \hspace{0.5cm}$\frac{\gamma}{2}\sum_{i=1}^{N}\|A_iu_i+\sum_{j\neq i}A_ju_j^{k}-b\|^2$}\\
\hline
\tabincell{l}{Specific choices for $\frac{1}{2}\sum_i^{N} \| u_i\|^2_{M_i}$} &
\tabincell{l}{(i) Newton/quassi-Newton type\\ \hspace{0.4cm}$M_i$ is the block diagonal of (approximated) Hessian\\ \hspace{0.4cm}matrix of $G(u^k)$ \\(ii) Constant matrix Type\\ \hspace{0.4cm}$M_i$ could be zero matrix, identity matrix or $A_i^{\top}A_{i}$  }\\
\hline
\end{tabular}
\end{center}
\caption{some choices for the three parts included in core functions $K^k(\cdot)$\label{table1}}
\end{table}
Figure~\ref{fg1} describes the process of VAPP-AL to derive the specific algorithm for solving practical problems.
\begin{figure}[h]
\centering\resizebox{15cm}{8cm}{\includegraphics{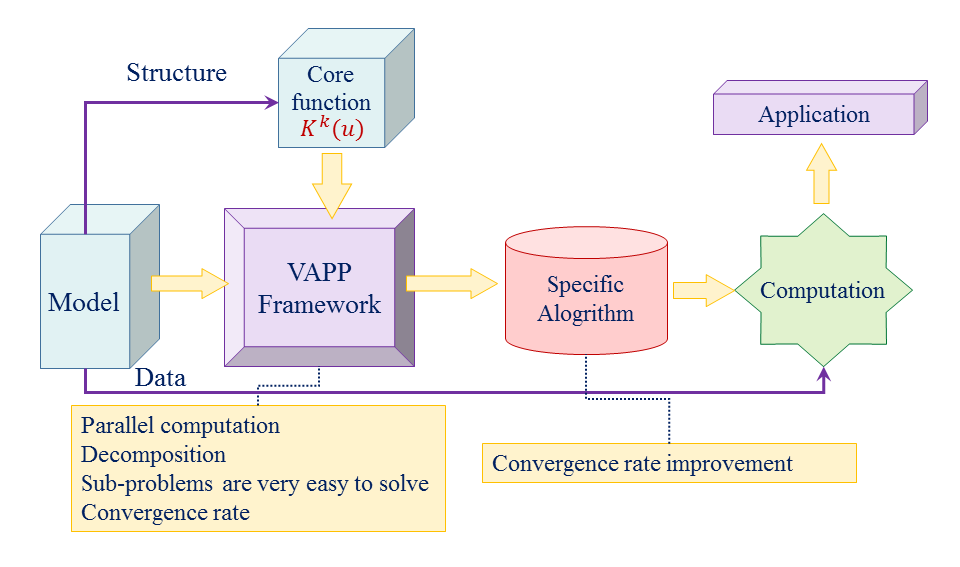}}
%\captionsetup{font=scriptsize} \centering
\caption{\label{fg1}Process of VAPP-AL to Derive the Specific Algorithm for Solving Practical Problems }
\end{figure}

Next we shall illustrate how to choose core functions in three practical problems. In particular, our core functions satisfy Assumption~\ref{assump2} and the resulting subproblems admit closed-form solutions.
Before introducing those problems, we first recall the so-called soft-shrinkage operator, which is formally defined as:\\
\begin{equation}
u^*=shrink(r,1/\mu)\triangleq sign(r)\cdot\max\{0,|r|-1/\mu\} \label{eq:shrink}
\end{equation}
where $\mu>0$, $r\in\RR^n$ and $sign(\cdot)$ is the sign function. It is well known \cite{Fusedlassosvm} that, $u^*$ is the optimal solution of the problem
\begin{eqnarray*}
\arg\min_{u\in\RR^n}\|u\|_1+\frac{\mu}{2}\|u-r\|^2.
\end{eqnarray*}
In the following, soft-shrinkage operator will be used to drive the expressions of solutions for subproblems of VAPP-AL.\\

{\bf Fused LASSO regularized SVM with quadratic hinge loss function}\\
Recently, the fused LASSO regularized SVM with quadratic hinge loss function has been successfully applied to a variety of applications; for example the diagnosis of disease \cite{Fusedlassosvm}.
Fused-SVM problem is given by:
\begin{equation}\label{Prob:LSVM1}
\begin{array}{lll}
\min\limits_{u\in \mathbf{R}^n}& \frac{1}{m}\sum_{i=1}^{m}\big{[}\max\{0,1-b_i(B_i^{\top}u)\}\big{]}^2+\lambda_1\|u\|_{1}+\lambda_2\sum_{j=2}^{n}|u_{j}-u_{j-1}|
\end{array}
\end{equation}
where $B=[B_1,B_2,\cdots,B_m]$ is a $n\times m$ matrix with $B_i$ being its $i$-th column and $b=[b_1,b_2,\cdots,b_m]^{\top}$ is an $m$-dimensional vector.
By introducing a matrix
$$D=\left(\begin{array}{cccccc}
               1&    -1&             0&\cdots&\cdots &0       \\
               0&     1&            -1&\cdots&\cdots &0       \\
          \vdots&\vdots&        \vdots&      &       &\vdots       \\
               0&     0&             0&\cdots&1      &-1
 \end{array}\right) \in\RR^{n-1\times n},$$
Fused-SVM can be equivalently transformed into the following nonlinear programming with equality constraints.
\begin{equation}\label{Prob:LSVM}
\begin{array}{lll}
\min\limits_{u\in \mathbf{R}^n, z\in\mathbf{R}^{n-1}}& \frac{1}{m}\sum_{i=1}^{m}\big{[}\max\{0,1-b_i(B_i^{\top}u)\}\big{]}^2+\lambda_1\|u\|_{1}+\lambda_2\|z\|_1\\
\rm {s.t}               & Du-z=0.
\end{array}
\end{equation}
Denoting $D_i$ to be the $i$-th column of matrix $D$,
we take core function as
\begin{equation*}
K^{k}(u,z)=\frac{\gamma}{2}\sum_{j=1}^{n}\|D_{j}u_{j}+\sum_{l\neq j}D_{l}u_{l}^k-z^k\|_2^2+\frac{\alpha_1}{2}\sum_{j=1}^{n} \|u_{j}-u_{j}^k\|_2^2+\frac{\gamma}{2}\|Du^k-z\|_2^2+\frac{\alpha_2}{2}\|z-z^k\|_2^2,
\end{equation*}
and $\epsilon=1$, then we obtain the Jacobian type decomposition method from VAPP-AL.\\
Now let's investigate the solvability of the subproblems of this method. First, the solution of the subproblem for $u_{j}$, $j=1,\cdots,n$ is given by
\begin{eqnarray*}
u_{j}^{k+1}&=&\arg\min_{u_{j}\in \RR} -\frac{2}{m}\sum_{i=1}^{m}\langle\max\{0,1-b_i(\sum_{l=1}^nB_{il}^{\top}u_l^{k})\},b_iB_{ij}^{\top}u_j\rangle+\langle p^k,D_{j}u_{j}\rangle+\lambda_1\|u_{j}\|_1\\
            &&+\frac{\gamma}{2}\|D_ju_j+\sum_{l\neq j}D_lu_l^k-z^k\|^2+\frac{\alpha_1}{2}\|u_{j}-u_{j}^k\|_2^2.\\
           &=&\arg\min_{u_j\in \RR} \|u_j\|_1+\frac{\mu_1}{2}\|u_j-r_1\|_2^2\\
           &=&shink(r_1,1/\mu_1)
\end{eqnarray*}
where $\mu_1=\frac{\gamma\|D_j\|^2+\alpha_1}{\lambda_1}$ and $r_1=\frac{\alpha_1u_j^k-\gamma(\sum_{l\neq j}D_lu_l^k-z^k)+\frac{2}{m}\sum_{i=1}^{m}B_{ij}b_i^{\top}\max\{0,1-b_i(\sum_{l=1}^nB_{il}^{\top}u_l^k)\}-D_j^{\top}p^k}{\gamma\|D_j\|_2^2+\alpha_1}$.\\
Then the problem can be solved as follows to update $z$.
\begin{eqnarray*}
z^{k+1}&=&\arg\min_{z\in \RR^{n-1}} \langle p^k,-z\rangle+\lambda_2\|z\|_1+\frac{\gamma}{2}\|Du^k-z\|_2^2+\frac{\alpha_2}{2}\|z-z^k\|_2^2\\
       &=&\arg\min_{z\in \RR^{n-1}} \|z\|_1+\frac{\gamma+\alpha_2}{2\lambda_2}\|z-\frac{\gamma Du^k+\alpha_2z^k+p^k}{\gamma+\alpha_2}\|_2^2\\
       &=&shink(\frac{\gamma Du^k+\alpha_2z^k+p^k}{\gamma+\alpha_2},\frac{\lambda_2}{\gamma+\alpha_2}).
\end{eqnarray*}
{\bf $l_1$-regularized logistic regression}\\
$l_1$-regularized logistic regression can be described as follows
\begin{equation}\label{Prob:RLR}
\begin{array}{lll}
\min\limits_{u\in \mathbf{R}^n}& \frac{1}{m}\sum_{i=1}^{m}\log (1+\exp(-b_i(B_i^{\top}u)))+\lambda\|u\|_1
\end{array}
\end{equation}
where $B=[B_1,B_2,\cdots,B_m]$ is a $n\times m$ matrix with $B_i$ being its $i$-th column and $b=[b_1,b_2,\cdots,b_m]^{\top}$ is an $m$-dimensional vector.
It has been proposed as a promising method for classification problem in machine learning \cite{linearSVM}.
To apply VAPP-AL, we introduce a new variable $z\in\RR^m$ as well as equality constraints $B^{\top}u=z$, and obtain the following equivalent problem:
\begin{equation}\label{Prob:RLR1}
\begin{array}{lll}
\min\limits_{u\in \mathbf{R}^n, z\in\mathbf{R}^m}& \frac{1}{m}\sum_{i=1}^{m}\log (1+\exp(-b_iz_i))+\lambda\|u\|_1\\
\rm {s.t}                                        & B^{\top}u-z=0.
\end{array}
\end{equation}
Then we adopt Newton type core function $K^{k}(u,z)=\frac{1}{2}\sum_{j=1}^{n} \|u_{j}-u_{j}^k\|_2^2+\frac{1}{2}\sum_{i=1}^{m}\|z_{i}-z_{i}^k\|_{M_{i}^k}^2$, where
$$M^k=\left(\begin{array}{ccccc}
         M_{1}^k&      &              &      &              \\
                &\ddots&              &      &              \\
                &      &M_{i}^k       &      &              \\
                &      &              &\ddots&              \\
                &      &              &      &M_{m}^k
 \end{array}\right)=\left(\begin{array}{ccccc}
  \frac{(b_1)^2\exp(-b_1v_1^k)}{m(1+\exp(-b_1z_1))^2}&      &              &      &              \\
                &\ddots&              &      &              \\
                &      &\frac{(b_i)^2\exp(-b_iv_i^k)}{m(1+\exp(-b_iz_i))^2}&      &              \\
                &      &              &\ddots&              \\
                &      &              &      &\frac{(b_m)^2\exp(-b_mv_m^k)}{m(1+\exp(-b_mz_m))^2}
 \end{array}\right)$$
is the Hessian matrix of loss function $f(z)=\frac{1}{m}\sum_{i=1}^{m}\log (1+\exp(-b_iz_i))$ with $M_{i}^k$ being the $i$-th component of the diagonal.
According to VAPP-AL algorithm, we can write out the 1-dimensional subproblems for $u_{j}$, $j=1,\cdots,n$ and $z_{i}$, $i=1,\cdots,m$, respectively.
Let $s_j$ be the $j$-th element of vector $s=B(p^k+\gamma(B^{\top}u^k-z^k))$, then
\begin{equation*}
u_{j}^{k+1}=\arg\min_{u_{j}\in \RR} \langle s_j, u_{j}\rangle+\lambda\|u_{j}\|_1+\frac{1}{2\epsilon}\|u_{j}-u_{j}^k\|_2^2.
\end{equation*}
By applying shrinkage operator, we obtain that
$u_j^{k+1}=shrink(u_j^{k}-\epsilon s_j,\epsilon\lambda)$. The subproblem of $z_i$ is given by
\begin{equation*}
z_{i}^{k+1}=\arg\min_{z_{i}\in \RR} \langle\frac{-b_i\exp(-b_iz_i^k)}{m(1+\exp(-b_iz_i^k))},z_i\rangle+\langle p_{i}^k+\gamma(B_{i}^{\top}u^k-z_{i}^k),-z_{i}\rangle+\frac{1}{2\epsilon}\|z_{i}-z_{i}^k\|_{M_i^k}^2,\; i=1,\cdots,m,
\end{equation*}
where $p_{i}$ is the $i$-th element of $m$-dimensional vector $p^k$. It is indeed an unconstrained quadratic problem and
admits a closed-form solution:
$z_i^{k+1}=z_i^k+\frac{\epsilon b_i\exp(-b_iz_i^k)}{M_{i}^km(1+\exp(-b_iz_i^k))}+\frac{\epsilon(p_{i}^k+\gamma(B_{i}^{\top}u^k-z_{i}^k))}{M_{i}^k}$.\\

{\bf Dual Support Vector Machine Problem}\\
Dual Support Vector Machine problem (DSVM) introduced in~\cite{Boser, Cortes, Platt} promotes a quadratic programming as follows:
\begin{equation}\label{Prob:DSVM}
\begin{array}{lll}
\min\limits_{u\in \mathbf{R}^n}& \frac{1}{2}u^{\top}Q u-e^{\top}u,\\
\rm {s.t}                      & y^{\top}u=0,\\
                               & 0\leq u\leq c.
\end{array}
\end{equation}
where $Q$ is a $n\times n$ p.s.d. symmetric matrix, $e$ and $y$ are two $n$-dimensional vectors.\\
We adopt core function as follows
\begin{eqnarray*}
K^{k}(u)&=&\frac{1}{2}\|u-u^k\|_2^2
\end{eqnarray*}
for VAPP-AL, then the iterative scheme of VAPP-AL leads to that
\begin{eqnarray*}
u^{k+1}&=&\arg\min_{0\leq u\leq c} \langle Qu^k,u\rangle - e^{\top}u + p^ky^{\top}u+\gamma y^{\top}u^ky^{\top}u+\frac{1}{2\epsilon}\|u-u^k\|_{2}^2\\
       &=&\min\{c,\max\{0,u^k-\epsilon(Qu^k+p^ky+\gamma y^{\top}u^ky-e)\}\},\\
p^{k+1}&=&p^k+\rho y^{\top}u^{k+1}.
\end{eqnarray*}

\section{Convergence Analysis for VAPP-AL}\label{sec:convergence}
In this section, we present the main convergence theorem of this paper, which states that the sequence generated by VAPP-AL is actually convergent under some mild conditions.
\begin{theorem}\label{theo:general-convergence} Suppose Assumption \ref{assump1} and Assumption \ref{assump2} hold. Then by picking
\begin{equation}\label{para-choice}0<\epsilon<\beta^{k}/\big{(}B_{G}+\gamma \cdot \lambda_{\max}(A^{\top}A)\big{)}\quad \mbox{and}\quad 0<\rho<(1 + \delta)\gamma,\quad\mbox{with some}\quad 0<\delta\leq1
\end{equation}
the sequence $\{(u^{k},p^{k})\}$ generated by VAPP-AL converges to $(u^{*},p^{*})$, which is the saddle point of $L$ over $U\times R^{m}$.
\end{theorem}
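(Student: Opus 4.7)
The plan is to adapt the classical primal-dual convergence analysis for augmented Lagrangian methods to the setting of a varying regularizer $K^k$. First, I would write the first-order optimality condition of the subproblem (AP$^k$) at $u=u^*$ and invoke the three-point Bregman identity
$$\langle \nabla K^k(u^{k+1})-\nabla K^k(u^k),\, u^*-u^{k+1}\rangle = D_K^k(u^*,u^k)-D_K^k(u^*,u^{k+1})-D_K^k(u^{k+1},u^k),$$
which turns the regularizer contribution into a telescoping Bregman difference plus a non-positive residual controlling $\|u^{k+1}-u^k\|^2$.

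Next I would combine the above with three ingredients: (i) convexity of $G$ at $u^k$ to bound $\langle \nabla G(u^k), u^*-u^k\rangle\le G(u^*)-G(u^k)$; (ii) the descent lemma for the $B_G$-smooth $G$ to bound $\langle \nabla G(u^k), u^k-u^{k+1}\rangle$; and (iii) the saddle-point inequality $(G+J)(u^*)\le (G+J)(u^{k+1})+\langle p^*, Au^{k+1}-b\rangle$, where I use $Au^*=b$. The dual cross term is rewritten via the update $p^{k+1}=p^k+\rho(Au^{k+1}-b)$ as $\tfrac{1}{2\rho}(\|p^k-p^*\|^2-\|p^{k+1}-p^*\|^2)+\tfrac{\rho}{2}\|Au^{k+1}-b\|^2$, while $\gamma\langle Au^k-b, Au^{k+1}-b\rangle$ is expanded by the parallelogram identity. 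Collecting everything, I expect to arrive at a one-step bound of the form
$$\Phi^{k+1}\le \Phi^k - \Big(\tfrac{\beta^k}{\epsilon}-B_G-\gamma\lambda_{\max}(A^{\top}A)\Big)\tfrac{\|u^{k+1}-u^k\|^2}{2} - c\,\|Au^{k+1}-b\|^2,$$
where $\Phi^k$ is a Lyapunov functional aggregating $\tfrac{1}{\epsilon}D_K^k(u^*,u^k)$, $\tfrac{1}{2\rho}\|p^k-p^*\|^2$, and a non-negative residual in $\|Au^k-b\|^2$, and $c>0$ depends on $\gamma,\rho,\delta$. The parameter choices in~\eqref{para-choice} are exactly what make both coefficients on the right strictly negative.

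The main obstacle will be that $\Phi^k$ involves the iteration-dependent $K^k$, so comparing $\Phi^{k+1}$ (which uses $K^{k+1}$) with the right-hand side of the one-step bound (which uses $K^k$) is not immediate and standard telescoping breaks down. I would handle this via the uniform sandwich $\tfrac{\beta}{2}\|v-w\|^2\le D_K^k(v,w)\le \tfrac{B}{2}\|v-w\|^2$ from Assumption~\ref{assump2}: this allows converting every $D_K^k$-term into an equivalent Euclidean quantity up to universal constants, yielding a quasi-Fej\'er estimate in $\|u^k-u^*\|^2+\|p^k-p^*\|^2$. Summing the one-step inequality then gives $\sum_k\|u^{k+1}-u^k\|^2<\infty$ and $\sum_k\|Au^{k+1}-b\|^2<\infty$, uniform boundedness of $\{u^k\}$ and $\{p^k\}$, and the asymptotic feasibility $Au^k\to b$ together with $u^{k+1}-u^k\to 0$.

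To finish, I would extract a limit $(\bar u,\bar p)$ of some subsequence $(u^{k_j},p^{k_j})$ and pass to the limit in the first-order optimality condition for (AP$^{k_j}$)---the uniform Lipschitz bound $B^k\le B$ controls the Bregman-gradient term while $u^{k_j+1}-u^{k_j}\to 0$ and $Au^{k_j+1}-b\to 0$ dispose of the coupling terms---to verify the VI conditions~\eqref{VIS_1}--\eqref{VIS_2} for $(\bar u,\bar p)$; hence $(\bar u,\bar p)$ is a saddle point of $L$. Finally, since the one-step inequality holds for an arbitrary saddle point, replaying it with $(u^*,p^*)$ replaced by $(\bar u,\bar p)$ and invoking the sandwich bound once more upgrades subsequential convergence into convergence of the full sequence $(u^k,p^k)\to(\bar u,\bar p)$.
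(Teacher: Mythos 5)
Your one-step analysis is essentially the paper's Lemma~\ref{lemma:KK-diff}: optimality condition of (AP$^k$) at $u=u^*$, the three-point Bregman identity, the descent lemma for the $B_G$-smooth $G$, the saddle-point inequality combined with $Au^*=b$, and the algebraic rewriting of the dual update. The endgame you describe (extract a cluster point, pass to the limit in the variational inequality to certify it is a saddle point, then replay the monotonicity estimate with $(\bar u,\bar p)$ in place of $(u^*,p^*)$ to upgrade to convergence of the whole sequence) is also exactly the paper's.

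The gap is in the step you yourself flag as the main obstacle. The one-step bound controls $D_K^{k}(u^*,u^{k+1})$ --- the Bregman distance of the \emph{old} core function at the \emph{new} iterate --- whereas the next Lyapunov value must contain $D_K^{k+1}(u^*,u^{k+1})$. The uniform sandwich $\tfrac{\beta}{2}\|u-v\|^2\le D_K^k(u,v)\le\tfrac{B}{2}\|u-v\|^2$ does not repair this: applying the lower bound on the left-hand side and the upper bound on the right-hand side of the one-step inequality yields $\tfrac{\beta}{2}\|u^{k+1}-u^*\|^2\le\tfrac{B}{2}\|u^k-u^*\|^2-(\mbox{good terms})$, a recursion with multiplicative factor $B/\beta\ge 1$. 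That is not a quasi-Fej\'er estimate: the induced perturbation $\tfrac{B-\beta}{2}\|u^k-u^*\|^2$ is neither summable nor even known to be bounded a priori, so you cannot conclude boundedness of the iterates, summability of $\|u^{k+1}-u^k\|^2$, or monotonicity of anything, except in the degenerate case $\beta=B$ (a fixed-modulus core function, which defeats the point of the theorem). The paper's device is different: it introduces a nonincreasing weight sequence $\{m^k\}$ with $m^{k+1}\le(\beta/B)\,m^k$ so that $m^{k+1}D_K^{k+1}(u,v)\le m^kD_K^k(u,v)$ pointwise (Lemma~\ref{lemma:unit}), and runs the descent argument on the weighted functional $\Lambda^k$ of~\eqref{func:Lambda}; the lower bound $\delta\le m^{k+1}/m^k$ is also precisely what produces the threshold $\rho<(1+\delta)\gamma$. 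A second, smaller discrepancy: the paper's $\Lambda^k$ carries the constraint-violation term with a \emph{negative} coefficient, $-\tfrac{\gamma\epsilon}{2}\|Au^k-b\|^2$ (nonnegativity of $\Lambda^k$ being recovered afterwards from the strong convexity of $K^k$ and the bound on $\epsilon$); with the nonnegative residual you propose, the telescoping of the $\|Au^{k+1}-b\|^2$ terms closes only for $\rho<\gamma$ rather than for the full range $\rho<(1+\delta)\gamma\le 2\gamma$ claimed in the theorem.
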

We would like to remark that the standard convergence analysis of ADMM
works only when the coupled term in the objective of problem~\eqref{Prob:general-function} disappears, i.e., $G(u)=0$.
However, when this is not the case, our algorithm still works and the convergence is guaranteed. The rest of this section is dedicated to the proof of Theorem~\ref{theo:general-convergence}. To this end, we need to present some lemmas as preparation. First of all, we list some useful facts of differentiable convex below.

\begin{lemma}\label{lemma:Lipschitz3point}
\noindent Let function $f$ be convex and differentiable on $U$.
\begin{itemize}
\item[(i)] If $f$ is strongly convex with constant $\beta_f$, then
\begin{equation}\label{LB}
\mbox{for all } u,v\in U, f(u)-f(v)\geq\langle\nabla f(v),u-v\rangle+\frac{\beta_f}{2}\|u-v\|^2.
\end{equation}
\item[(ii)] If the derivative of $f$ is Lipschitz with constant $B_f$, then
\begin{equation}\label{UB}
\mbox{for all } u,v\in U, f(u)-f(v)\leq \langle\nabla f(v),u-v\rangle+\frac{B_f}{2}\|u-v\|^2,
\end{equation}
    and
\begin{equation}\label{threePoint}
\mbox{for all } u,v\in U, f(v)-f(w)\leq \langle\nabla f(u),v-w\rangle+\frac{B_f}{2}\|u-v\|^2.
\end{equation}
\end{itemize}
\end{lemma}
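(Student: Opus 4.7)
The plan is to prove each of the three inequalities by reducing them to standard one-dimensional calculus arguments or by combining the first two parts. All three statements are classical facts about convex differentiable functions, so the proof is essentially bookkeeping; the only thing to be careful about is choosing the right integral representation and making sure the three-point inequality (iii) follows cleanly from (ii) plus convexity.

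For part (i), I would use the equivalent characterization that $f$ is strongly convex with parameter $\beta_f$ if and only if $h(u) := f(u) - \frac{\beta_f}{2}\|u\|^2$ is convex and differentiable. Applying the ordinary convexity gradient inequality to $h$ at $v$ yields $h(u) \geq h(v) + \langle \nabla h(v), u-v\rangle$; expanding $\nabla h(v) = \nabla f(v) - \beta_f v$ and simplifying the quadratic terms via the identity $\|u\|^2 - \|v\|^2 - 2\langle v, u-v\rangle = \|u-v\|^2$ gives exactly \eqref{LB}. Alternatively one can integrate along the segment $v + t(u-v)$, $t \in [0,1]$, using monotonicity of $\nabla f$ shifted by $\beta_f \mathrm{Id}$; either route is short.

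For part (ii), I would invoke the fundamental theorem of calculus along the segment from $v$ to $u$:
\begin{equation*}
f(u) - f(v) - \langle \nabla f(v), u-v\rangle = \int_0^1 \langle \nabla f(v + t(u-v)) - \nabla f(v),\, u-v\rangle\, dt.
\end{equation*}
Applying Cauchy--Schwarz and the Lipschitz bound $\|\nabla f(v+t(u-v)) - \nabla f(v)\| \leq B_f\, t\, \|u-v\|$, and evaluating $\int_0^1 t\, dt = \tfrac{1}{2}$, produces \eqref{UB}.

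For part (iii), the decomposition $f(v) - f(w) = [f(v) - f(u)] + [f(u) - f(w)]$ is the key. To the first bracket I apply \eqref{UB} at the points $u,v$ to get $f(v) - f(u) \leq \langle \nabla f(u), v-u\rangle + \tfrac{B_f}{2}\|u-v\|^2$. To the second bracket I apply the plain convexity inequality (take $\beta_f = 0$ in \eqref{LB}, which holds for any differentiable convex $f$), yielding $f(u) - f(w) \leq \langle \nabla f(u), u-w\rangle$. Summing collapses the two inner products into $\langle \nabla f(u), v-w\rangle$, which is exactly \eqref{threePoint}. No step in the proof is really hard; the only point worth attention is noticing that the telescoping of inner products in (iii) is what allows the Lipschitz penalty to be only in terms of $\|u-v\|^2$ rather than involving $w$ at all.
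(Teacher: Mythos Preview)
Your proposal is correct and follows essentially the same route as the paper. The paper simply declares \eqref{LB} and \eqref{UB} to be classical without proof, and for \eqref{threePoint} it splits the inner product $\langle\nabla f(u),w-v\rangle=\langle\nabla f(u),w-u\rangle+\langle\nabla f(u),u-v\rangle$ and bounds the two pieces by convexity and \eqref{UB} respectively---which is exactly your decomposition $f(v)-f(w)=[f(v)-f(u)]+[f(u)-f(w)]$ read from the other side.
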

\begin{proof}
\noindent The results~\eqref{LB} and~\eqref{UB} are classical. From the convexity of $f$ and~\eqref{UB}, we have
\begin{eqnarray*}
\langle\nabla f(u),w-v\rangle&=&\langle\nabla f(u),w-u\rangle+\langle\nabla f(u),u-v\rangle\leq [f(w)-f(u)]+[f(u)-f(v)+\frac{B_{f}}{2}\|u-v\|^{2}]\\
&=&f(w)-f(v)+\frac{B_{f}}{2}\|u-v\|^{2}.
\end{eqnarray*}
\end{proof}

Then we have following result regarding the sequence generated by VAPP-AL.
\begin{lemma}\label{lemma:KK-diff}
Suppose Assumption \ref{assump1} and Assumption \ref{assump2} hold, $(u^{*},p^{*})$ is any saddle point of $L$, and ${(u^{k},p^{k})}$ is generated by VAPP-AL. Then it holds that
\begin{eqnarray}
&&\langle \nabla \Kk(u^{k})-\nabla \Kk(u^{k+1}),u^{*}-u^{k+1}\rangle + \frac{\epsilon}{2\rho}\left(\|p^{k+1}-p^{*}\|^{2}-\|p^{k}-p^{*}\|^{2} \right) \nonumber\\
&\le& \frac{\epsilon}{2} \left({B_{G}}\|u^k - u^{k+1}\|^2+(\rho-\gamma)\|Au^{k+1}-b\|^2 -\gamma \|Au^{k}-b\|^{2}+\gamma \|A(u^{k+1}-u^{k})\|^{2} \right) \label{eq:KK-diff}
\end{eqnarray}
\end{lemma}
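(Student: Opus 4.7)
The plan is to start from the first-order optimality condition for the minimizer $u^{k+1}$ of the strongly convex subproblem \eqref{primal}. Since $G$, $K^k$ and the linear terms are smooth, this condition can be written in variational-inequality form: for every $u\in U$,
\begin{equation*}
J(u)-J(u^{k+1}) + \langle \nabla G(u^{k}) + A^{\top}(p^{k}+\gamma(Au^{k}-b)) + \tfrac{1}{\epsilon}(\nabla K^{k}(u^{k+1})-\nabla K^{k}(u^{k})),\, u-u^{k+1}\rangle \ge 0.
\end{equation*}
I would then specialize this to $u=u^{*}$, which after rearrangement isolates the quantity $\tfrac{1}{\epsilon}\langle \nabla K^{k}(u^{k})-\nabla K^{k}(u^{k+1}),u^{*}-u^{k+1}\rangle$ on the left and leaves $J(u^{*})-J(u^{k+1})$ plus linear-in-$A$ and dual terms on the right.

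Next, I would control the $J$-difference using the saddle-point VI \eqref{VIS_1} evaluated at $u=u^{k+1}$, which gives $J(u^{*})-J(u^{k+1}) \le G(u^{k+1})-G(u^{*}) + \langle p^{*},A(u^{k+1}-u^{*})\rangle$. The remaining $G$-terms collapse neatly via the three-point inequality \eqref{threePoint} of Lemma~\ref{lemma:Lipschitz3point}(ii) applied with $(u,v,w)=(u^{k},u^{k+1},u^{*})$, producing the bound $G(u^{k+1})-G(u^{*})+\langle \nabla G(u^{k}),u^{*}-u^{k+1}\rangle \le \tfrac{B_{G}}{2}\|u^{k}-u^{k+1}\|^{2}$. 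This already accounts for the $B_{G}\|u^{k}-u^{k+1}\|^{2}$ term appearing on the right-hand side of \eqref{eq:KK-diff}.

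The remaining work is to rewrite the residual inner product $\langle p^{*}-p^{k}-\gamma(Au^{k}-b),\, A(u^{k+1}-u^{*})\rangle$ in the desired telescoping form. Using $Au^{*}=b$ I replace $A(u^{k+1}-u^{*})$ by $Au^{k+1}-b$, and then invoke the dual update $Au^{k+1}-b = \tfrac{1}{\rho}(p^{k+1}-p^{k})$ in the $p^{*}-p^{k}$ piece. Standard identities $\langle p^{*}-p^{k},p^{k+1}-p^{k}\rangle = \tfrac{1}{2}(\|p^{*}-p^{k}\|^{2}-\|p^{*}-p^{k+1}\|^{2}+\|p^{k+1}-p^{k}\|^{2})$ and $\|p^{k+1}-p^{k}\|^{2}=\rho^{2}\|Au^{k+1}-b\|^{2}$ yield the $\tfrac{1}{2\rho}(\|p^{k}-p^{*}\|^{2}-\|p^{k+1}-p^{*}\|^{2})$ telescope together with a $\tfrac{\rho}{2}\|Au^{k+1}-b\|^{2}$ term. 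For the $-\gamma\langle Au^{k}-b,Au^{k+1}-b\rangle$ piece I use the polarization identity $\langle x,y\rangle = \tfrac{1}{2}(\|x\|^{2}+\|y\|^{2}-\|x-y\|^{2})$ to produce $-\tfrac{\gamma}{2}\|Au^{k}-b\|^{2}-\tfrac{\gamma}{2}\|Au^{k+1}-b\|^{2}+\tfrac{\gamma}{2}\|A(u^{k+1}-u^{k})\|^{2}$. Combining with the previous $\tfrac{\rho}{2}\|Au^{k+1}-b\|^{2}$ term gives the net coefficient $\tfrac{\rho-\gamma}{2}$ on $\|Au^{k+1}-b\|^{2}$. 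Multiplying through by $\epsilon$ and rearranging reproduces \eqref{eq:KK-diff} exactly.

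I do not expect a genuine obstacle; the key conceptual point is using the three-point Lipschitz inequality to handle the mismatch between $\nabla G(u^{k})$ (frozen at iterate $k$) and the $G$-differences at $u^{k+1}$ and $u^{*}$. The rest is careful but routine bookkeeping of quadratic identities for the dual and residual terms; the main place to be careful is sign management when passing between $A(u^{k+1}-u^{*})$ and $Au^{k+1}-b$, and when converting the single cross term $\langle p^{*}-p^{k},p^{k+1}-p^{k}\rangle$ into the telescoping $\|p^{k}-p^{*}\|^{2}-\|p^{k+1}-p^{*}\|^{2}$ form with the correct leftover $\|p^{k+1}-p^{k}\|^{2}$ contribution.
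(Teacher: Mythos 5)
Your proposal is correct and follows essentially the same route as the paper's proof: the subproblem's variational inequality at $u=u^{*}$, the three-point Lipschitz bound \eqref{threePoint} for the frozen gradient $\nabla G(u^{k})$, the saddle-point inequality (your use of \eqref{VIS_1} at $u=u^{k+1}$ is equivalent to the paper's $L(u^{*},p^{*})\le L(u^{k+1},p^{*})$), and the dual update to telescope the $p$-terms. The only cosmetic difference is that you obtain the dual telescope from the polarization identity for $\langle p^{*}-p^{k},p^{k+1}-p^{k}\rangle$ while the paper expands $\|p^{k+1}-p^{*}\|^{2}$ directly; these are algebraically identical.
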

\begin{proof} Recall that in every iteration of VAPP the subproblem is given by
\begin{equation}\label{VAPP:subprob}
u^{k+1}=\arg\min\limits_{u\in U}K^{k}(u)- \langle \nabla K^{k}(u^{k}), u \rangle +\epsilon\,[ \langle \nabla G(u^{k}), u \rangle + J(u)
+ \langle p^{k}+\gamma(Au^{k}-b)), Au\rangle ]
\end{equation}
Since $K^{k}(\cdot)$ is strongly convex, the unique solution $u^{k+1}$ of is characterized by the following variational inequality:
\begin{eqnarray}
\langle \nabla \Kk(u^{k+1})-\nabla \Kk(u^{k}),u-u^{k+1}\rangle+ \epsilon\bigg{(} \langle  \nabla G(u^{k}),u-u^{k+1}\rangle +J(u)-J(u^{k+1}) \nonumber\\
+ \langle p^{k}+\gamma(Au^{k}-b),A(u-u^{k+1})\rangle \bigg{)}\geq 0 \quad \forall u\in U. \label{eq:vi}
\end{eqnarray}
By taking $u=u^{*}$, one has that
\begin{eqnarray}
&&\langle \nabla \Kk(u^{k})-\nabla \Kk(u^{k+1}),u^{*}-u^{k+1}\rangle\nonumber\\
&\leq&\epsilon\left(\langle \nabla G(u^{k}), u^{*}-u^{k+1}\rangle + J(u^{*})-J(u^{k+1})+\langle p^{k}+\gamma(Au^{k}-b),A(u^{*}-u^{k+1})\rangle\right)\nonumber \\
&\le & \epsilon\left( (G+J)(u^{*})-(G+J)(u^{k+1})+\frac{B_{G}}{2}\|u^{k}-u^{k+1}\|^{2} +\langle p^{k}+\gamma(Au^{k}-b),A(u^{*}-u^{k+1})\rangle \right)\nonumber \\ \label{eq:deltaKk}
\end{eqnarray}
where the second last inequality is due to Lemma \ref{lemma:Lipschitz3point}. Furthermore, since $(u^{*},p^{*})$ is a saddle point,
\begin{eqnarray}
(G+J)(u^{*}) &=& (G+J)(u^{*}) + \langle p^*, Au^*-b \rangle  \nonumber \\
&=& L(u^{*},p^{*}) \nonumber \\
&\le & L(u^{k+1},p^{*}) = (G+J)(u^{k+1}) + \langle p^*, Au^{k+1}-b \rangle
\label{eq:9}
\end{eqnarray}
Combining \eqref{eq:deltaKk}, \eqref{eq:9} and the fact $Au^* = b$ yields
\begin{eqnarray}
&&\langle \nabla \Kk(u^{k})-\nabla \Kk(u^{k+1}),u^{*}-u^{k+1}\rangle \nonumber \\
&\le& \epsilon\left(\frac{B_{G}}{2}\|u^{k}-u^{k+1}\|^{2} +\langle p^*-p^{k} - \gamma(Au^{k}-b),Au^{k+1}-b\rangle \right) \label{eq:deltaKk2}
\end{eqnarray}
On the other hand, from the dual update in VAPP-AL
\begin{equation}\label{VAPP:dual-update}
p^{k+1}=p^{k}+\rho(\Aukpmb),
\end{equation}
it follows that
\begin{eqnarray*}
&&\|p^{k+1}-p^{*}\|^{2}\\
&=&\|p^{k}-p^{*}\|^{2}+\rho^{2}\|Au^{k+1}-b\|^{2}+2\rho\langle p^{k}-p^{*},Au^{k+1}-b\rangle\\
&=&\|p^{k}-p^{*}\|^{2}+\rho(\rho-2\gamma )\|Au^{k+1}-b\|^{2}+2\rho\langle p^{k}-p^{*}+\gamma(Au^{k+1}-b),Au^{k+1}-b\rangle.
\end{eqnarray*}
As a result,
\begin{eqnarray}
&&\frac{\epsilon}{2\rho}\|p^{k+1}-p^{*}\|^{2}-\frac{\epsilon}{2\rho}\|p^{k}-p^{*}\|^{2}\nonumber\\
&=&\epsilon \left(\frac{(\rho-2\gamma )}{2}\|Au^{k+1}-b\|^{2}+ \langle p^{k}-p^{*}+\gamma(Au^{k+1}-b),Au^{k+1}-b\rangle \right).\label{eq:deltap}
\end{eqnarray}
Adding \eqref{eq:deltap} and \eqref{eq:deltaKk2} together yields that
\begin{eqnarray*}
&&\langle \nabla \Kk(u^{k})-\nabla \Kk(u^{k+1}),u^{*}-u^{k+1}\rangle + \frac{\epsilon}{2\rho}\left(\|p^{k+1}-p^{*}\|^{2}-\|p^{k}-p^{*}\|^{2} \right)\\
&\le& \frac{\epsilon}{2} \left({B_{G}}\|u^k - u^{k+1}\|^2+(\rho - \gamma)\|Au^{k+1}-b\|^2 - \gamma\|Au^{k+1} -b\|^2+2\gamma \langle A(u^{k+1}-u^{k}), Au^{k+1}-b \rangle \right) \\
&=&  \frac{\epsilon}{2} \left({B_{G}}\|u^k - u^{k+1}\|^2+(\rho - \gamma)\|Au^{k+1}-b\|^2 -\gamma \|Au^{k}-b\|^{2}+\gamma \|A(u^{k+1}-u^{k})\|^{2} \right).
\end{eqnarray*}
\end{proof}

\begin{lemma}\label{lemma:unit}
Suppose that there are positive $\alpha$, $T$ and the sequence of functions $\{\mu^k(u,v)\}$ satisfies the following conditions:
\begin{equation}
\frac{\alpha}{2}\|u-v\|^2\leq\mu^k(u,v)\leq\frac{T}{2}\|u-v\|^2, \forall u,v\in U.\label{eq:mu-Bound}
\end{equation}
Moreover, let the sequence $\{m^{k}\}$ be constructed such that
\begin{equation}
0<\delta\leq m^{k+1}\leq \frac{\alpha}{T}m^{k}\leq m^{k}\leq 1, \mbox{with some}\quad 0<\delta\leq1,\label{eq:mk}
\end{equation}
then
\begin{itemize}
\item[(i)] $\mu^k(u,v)$ is a Distance-like function on $U\times U$, i.e., $$\mbox{for any } u,v\in U, \mbox{ any } k\in\mathbb{N}, \mu^k(u,v)\geq 0 \mbox{ and } \mu^k(u,v)=0 \mbox{ iff } u=v.$$
\item[(ii)] for all $u,v\in U$, one has
\begin{equation}
m^{k+1}\mu^{k+1}(u,v)\leq m^{k}\mu^k(u,v).\label{eq:KK-vary}
\end{equation}
\end{itemize}
\end{lemma}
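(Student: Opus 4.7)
The plan is to prove both parts by a direct sandwich argument using the two-sided bound \eqref{eq:mu-Bound} together with the ratio inequality \eqref{eq:mk}. Nothing deeper appears to be required, since the hypotheses essentially encode exactly what the conclusions demand; the main issue will simply be making the chain of inequalities clean and verifying the degenerate case $u=v$.

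For part (i), I would first observe that the lower bound in \eqref{eq:mu-Bound} gives $\mu^k(u,v)\ge \tfrac{\alpha}{2}\|u-v\|^2\ge 0$, so $\mu^k$ is nonnegative. For the characterization of zeros, if $u=v$ then the upper bound gives $\mu^k(u,v)\le 0$, which combined with nonnegativity forces $\mu^k(u,v)=0$. Conversely, if $\mu^k(u,v)=0$, the lower bound yields $\tfrac{\alpha}{2}\|u-v\|^2\le 0$, and since $\alpha>0$ this forces $\|u-v\|=0$, i.e.\ $u=v$. This is the distance-like property.

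For part (ii), I would chain the bounds in the order: upper bound on $\mu^{k+1}$, then the ratio hypothesis on $m^{k+1}$, then lower bound on $\mu^k$. Explicitly, for any $u,v\in U$,
\begin{equation*}
m^{k+1}\mu^{k+1}(u,v)\;\le\; m^{k+1}\cdot\tfrac{T}{2}\|u-v\|^2\;\le\;\tfrac{\alpha}{T}m^{k}\cdot\tfrac{T}{2}\|u-v\|^2\;=\;m^{k}\cdot\tfrac{\alpha}{2}\|u-v\|^2\;\le\; m^{k}\mu^{k}(u,v),
\end{equation*}
which is precisely \eqref{eq:KK-vary}. The first inequality uses the upper bound in \eqref{eq:mu-Bound} applied at index $k+1$, the second uses $m^{k+1}\le\tfrac{\alpha}{T}m^{k}$ from \eqref{eq:mk} (multiplying the positive quantity $\tfrac{T}{2}\|u-v\|^2$), and the last inequality uses the lower bound in \eqref{eq:mu-Bound} applied at index $k$.

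Since both steps reduce to assembling the given inequalities in a single line, there is no serious obstacle; the only thing to double-check is that all the multiplied quantities are nonnegative so that the inequalities propagate in the claimed direction, which is guaranteed by $\alpha,T>0$ and $m^k>0$. The lower-bound condition $m^{k+1}\ge\delta>0$ in \eqref{eq:mk} is not actually needed for this lemma itself, but it will presumably be exploited later when this lemma feeds into the convergence proof of Theorem~\ref{theo:general-convergence}.
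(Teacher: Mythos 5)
Your proof is correct and follows essentially the same argument as the paper: part (i) is the direct sandwich from \eqref{eq:mu-Bound}, and part (ii) is the same chain (upper bound on $\mu^{k+1}$, the ratio $m^{k+1}\le\tfrac{\alpha}{T}m^k$, lower bound on $\mu^k$) that the paper writes in the equivalent form $\mu^{k+1}(u,v)\le\tfrac{T}{\alpha}\mu^k(u,v)\le\tfrac{m^k}{m^{k+1}}\mu^k(u,v)$. Your write-up of (i) is actually more explicit than the paper's, which dismisses it as obvious.
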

\begin{proof}
\noindent (i) is obviously from~\eqref{eq:mu-Bound}. For (ii), since sequence $\{m^{k}\}$ satisfies~\eqref{eq:mk}, then for all $u,v\in U$ we have
\begin{eqnarray*}
\mu^{k+1}(u,v)&\leq& \frac{T}{2}\|u-v\|^{2}=\frac{T}{\alpha}\frac{\alpha}{2}\|u-v\|^{2}\\
&\leq& \frac{T}{\alpha} \mu^{k}(u,v)\leq \frac{m^{k}}{m^{k+1}} \mu^{k}(u,v),
\end{eqnarray*}
and the desired result follows.
\end{proof}

Now we are ready to prove the convergence of VAPP-AL.

{\bf Proof of Theorem~\ref{theo:general-convergence}}
\begin{proof} According to Lemma~\ref{lemma:unit}, we can take the sequence $\{m^k\}$ such that
\begin{equation}\label{eq:mk-convergence}
0<\delta\leq m^{k+1}\leq \frac{\beta}{B}m^{k}\leq m^{k}\leq 1,\quad\mbox{with some}\quad 0<\delta\leq1.
\end{equation}
Moreover, for the definition of $D_K^k(u,v)$ and Assumption~\ref{assump2} of $\Kk$, we have
\begin{eqnarray*}
D_K^k(u,v)\geq\frac{\beta^k}{2}\|u-v\|^2\geq\frac{\beta}{2}\|u-v\|^2,\\
D_K^k(u,v)\leq\frac{B^k}{2}\|u-v\|^2\leq\frac{B}{2}\|u-v\|^2.
\end{eqnarray*}
Consequently, from Lemma~\ref{lemma:unit}, we obtain
\begin{equation}\label{equation_KKunite}
m^{k+1}D_K^{k+1}(u,v)\leq m^{k}D_K^k(u,v).
\end{equation}
Then for the pair $(u^*,p^*)$ saddle point of the Lagrangian of (P), we define the following function
\begin{eqnarray}\label{func:Lambda}
\Lambda^k(u,p)=m^k\left( D_K^k(u^*,u) +\frac{\epsilon}{2\rho}\|p-p^{*}\|^2 - \frac{\gamma \epsilon}{2} \|Au - b\|^2\right),
\end{eqnarray}
where $(u^*,p^*)$ is a saddle point associated with the Lagrangian function $L(u,p)$.
Then due to the strongly convexity of $\Kk$, we have that
\begin{eqnarray}
\Lambda^k(u,p)&=&m^k\left( D_K^k(u^*,u) +\frac{\epsilon}{2\rho}\|p-p^{*}\|^2 - \frac{\gamma \epsilon}{2} \|Au - b\|^2\right)\nonumber\\
&\geq&m^k\left( \frac{\beta^{k}}{2}\|u^*-u\|^{2} + \frac{\epsilon}{2\rho}\|p-p^{*}\|^2-\frac{\gamma \epsilon}{2}\lambda_{max}(A^{\top}A)\|u-u^{*}\|^{2}\right)  \nonumber \\
            &\geq& m^k\left( \frac{1}{2}\left(\beta^{k}-\gamma\epsilon  \lambda_{max}(A^{\top}A)\right)\|u-u^{*}\|^{2}+\frac{\epsilon}{2\rho}\|p-p^{*}\|^{2}\right)  \geq 0 \label{nonegative-measure-func},
\end{eqnarray}
where the last inequality follows from the choice of $\epsilon$ and $\rho$ in~\eqref{para-choice}. That is the distance between $(u,p)$ and saddle point $(u^*,p^*)$ is quantified by $\Lambda^k(u,p)$. As a result, to achieve the convergence, it suffices to study the sequence $\{\Lambda^k(u^{k},p^{k})\}$.

According to Assumption~\ref{assump2} and~\eqref{equation_KKunite}, one has that
\begin{eqnarray}
&&\Lambda^{k+1}(u^{k+1},p^{k+1})-\Lambda^k(u^{k},p^{k}) \nonumber \\
& = & m^{k+1}D_K^{k+1}(u^*,u^{k+1})-m^k D_K^k(u^*,u^k)+\frac{m^{k+1}\epsilon}{2\rho} \|p^{k+1}-p^{*}\|^{2}- \frac{m^{k+1}\gamma \epsilon}{2} \|Au^{k+1} - b\|^2  \nonumber \\
&&    - \frac{m^{k}\epsilon}{2\rho} \|p^{k}-p^{*}\|^{2}  + \frac{m^k\gamma \epsilon}{2} \|Au^k - b\|^2\nonumber \\
&\le& m^{k}\left(\Kk(u^{*}) -K^{k}(u^{k+1})-\langle \nabla K^{k}(u^{k+1}),u^{*}-u^{k+1}\rangle - \Kk(u^{*}) +  K^{k}(u^{k}) + \langle \nabla \Kk(u^{k}),u^{*}-u^{k}\rangle  \right) \nonumber \\
&& +\frac{m^{k}\epsilon}{2\rho} \|p^{k+1}-p^{*}\|^{2}- \frac{m^{k}\epsilon}{2\rho} \|p^{k}-p^{*}\|^{2} - \frac{m^{k+1}\gamma \epsilon}{2} \|Au^{k+1} - b\|^2 + \frac{m^k\gamma \epsilon}{2} \|Au^k - b\|^2 \nonumber \\
&=& m^k\left( \Kk(u^{k})-\Kk(u^{k+1})- \langle \nabla \Kk(u^{k}),u^{k}-u^{k+1}\rangle  \right) \label{first-term} \\
&& +m^k\left( \langle\nabla \Kk(u^{k}) - \nabla \Kk(u^{k+1}),u^{*}-u^{k+1}\rangle + \frac{\epsilon}{2\rho} \big{(}\|p^{k+1}-p^{*}\|^{2}-  \|p^{k}-p^{*}\|^{2} \big{)}\right) \label{second-term}\\
&& - \frac{m^{k+1}\gamma \epsilon}{2} \|Au^{k+1} - b\|^2 + \frac{m^k\gamma \epsilon}{2} \|Au^k - b\|^2. \nonumber
\end{eqnarray}
The convexity of $\Kk(\cdot)$ implies that formula~\eqref{first-term} is less than $-\frac{m^k\beta^{k}}{2}\|u^{k}-u^{k+1}\|^{2}$. Note that \eqref{second-term}
can further be bounded above by using Lemma~\ref{lemma:KK-diff}. Therefore,
\begin{eqnarray}
    && \Lambda^{k+1}(u^{k+1},p^{k+1})-\Lambda^k(u^{k},p^{k})\nonumber\\
    &\leq& m^k\left(  -\frac{(\beta^{k}-B_{G}\epsilon)}{2}\|u^{k}-u^{k+1}\|^{2} + \frac{\epsilon}{2} \left( (\rho-(1+\frac{m^{k+1}}{m^k})\gamma)\|Au^{k+1}-b\|^2 +\gamma \|A(u^{k+1}-u^{k})\|^{2} \right) \right) \nonumber\\
     &\leq& m^k\left(  -\frac{(\beta^{k}-B_{G}\epsilon)}{2}\|u^{k}-u^{k+1}\|^{2} + \frac{\gamma\, \epsilon}{2}\lambda_{max}(A^{\top}A)\|u^{k+1}-u^{k}\|^{2} + \frac{\epsilon}{2} (\rho-(1 + \delta)\gamma)\|Au^{k+1}-b\|^2 \right)  \nonumber\\
&\leq& m^k\left( \frac{1}{2}\left(\epsilon\left(B_{G}+\gamma \lambda_{max}(A^{\top}A)\right)-\beta^{k}\right)\|u^{k}-u^{k+1}\|^{2} +\frac{\epsilon}{2}(\rho-(1 + \delta)\gamma )\|Au^{k+1}-b\|^{2}\right)  \le 0.\label{up-bound-Lambda}
\end{eqnarray}
where the second inequality follows as $\delta\le m^{k+1}/m^k  $ and the last inequality is due to the choice of $\epsilon$ and $\rho$ defined in~\eqref{para-choice}. Consequently $\{\Lambda^k(u^{k},p^{k})\}$ is nonincreasing. This combined with~\eqref{nonegative-measure-func} implies that
 $\{\Lambda^k(u^{k},p^{k})\}$ has a limit,
\begin{equation}
\lim_{k\to \infty}\|p^{k+1} - p^{k}\|/\rho = \lim_{k\to \infty}\|Au^{k+1}-b\|=0 \quad \mbox{and}\quad \lim_{k\to \infty}\|u^{k+1}-u^{k}\|=0\label{eq:10}
\end{equation}
Moreover, \eqref{nonegative-measure-func} and boundedness of $\{\Lambda^k(u^{k},p^{k})\}$ implies that $\{u^{k}\}$ and $\{p^{k}\}$ are bounded as well.
Therefore the sequence $\{(u^{k},p^{k})\}$ has a cluster point $(\bar{u},\bar{p})$. Taking the limit in~\eqref{eq:10} gives that
\begin{equation}\label{bar-u-feasible}
\quad\hspace{5cm} A\bar{u}-b=0.
\end{equation}
Furthermore, since gradients of $\Kk$ and $G$ are Lipschitz continuous,
$$
\lim_{k\to \infty}\|\nabla \Kk(u^{k+1})-\nabla \Kk(u^{k})\|=0 \quad \mbox{and}\quad \lim_{k\to \infty}\|\nabla G(u^{k+1})-\nabla G(u^{k})\|=0.
$$
Now letting $k+1 \to \infty$ in~\eqref{eq:vi} and then combining the formulas above together with the convexity of $G$ and~\eqref{eq:10} yields
\begin{equation}\label{eq:11}
\quad\hspace{2cm}(G+J)(u)-(G+J)(\bar{u})+\langle \bar{p},Au-b\rangle \geq 0,\forall u\in U.
\end{equation}
Consequently \eqref{bar-u-feasible} and \eqref{eq:11} imply that
\begin{equation*}
L(\bar{u},p)=L(\bar{u},\bar{p})=(G+J)(\bar{u})+\langle p, A\bar{u}-b\rangle\leq (G+J)(u)+\langle \bar{p},Au-b\rangle=L(u,\bar{p}),\;\forall p\in R^{m},\;\forall u\in U.
\end{equation*}
From the definition of saddle point, it holds that $(\bar{u},\bar{p})\in U^{*}\times P^{*}$.

Note that the argument above goes through as long as $(u^{*},p^{*})$ is a saddle point of Lagrangian function $L(u , p)$.
Therefore, we can set $u^{*}=\bar{u}$, $p^{*}=\bar{p}$ and taking limit for sequence $\{\Lambda^k(u^{k},p^{k})\}$. From construction~\eqref{func:Lambda} of $\Lambda^k(u,v)$, we know that
zero is a cluster point of $\{\Lambda^k(u^{k},p^{k})\}$.
Moreover, we have shown that the limit of the total sequence $\{\Lambda^k(u^{k},p^{k})\}$ exists. As a result, $\Lambda^k(u^{k},p^{k}) \to 0$. This combined with \eqref{nonegative-measure-func} implies that  $u^{k}\rightarrow u^{*}$ and $p^{k}\rightarrow p^{*}$.
\end{proof}

\section{Complexity analysis in Ergodic and non-Ergodic sense}
\subsection{Convergence Rate Analysis in Ergodic sense}
We first analyze the rate of convergence in ergodic sense. From the variational inequalities system~\eqref{VIS_1}-\eqref{VIS_2} for (P), we can look at an equivalent variational inequality
\begin{equation}\label{VI}
H(w)-H(w^*)+\langle F(w), \Omega(w)-\Omega(w^*)\rangle\geq 0, \forall w\in\mathbf{W},
\end{equation}
where
\begin{eqnarray*}
w=\left(\begin{array}{c}u\\p\end{array}\right), H(w)=(G+J)(u), F(w)=\left(\begin{array}{c}p\\b-Au\end{array}\right), \Omega(w)=\left(\begin{array}{c}Au-b\\p\end{array}\right)\, \mbox{and}\,\, \mathbf{W}=U\times \mathbf{C}^*.
\end{eqnarray*}
In the rest, we show that after running $t$ iterations of the VAPP-AL, we can find a $w\in \mathbf{W}$ such that~\eqref{VI} is approximately satisfied with an error of $O(1/t)$, thus proving convergence rate of $O(1/t)$ for the VAPP-AL algorithm.\\
\begin{theorem}\label{thm:ergodic_iteration_complexity}
\noindent Suppose Assumptions of Theorem~\ref{theo:general-convergence} hold, then
\begin{itemize}
\item[(i)] we can construct the sequence $\{m^k\}$ satisfying
\begin{equation}\label{eq:mk-rate}
0<\delta\leq m^{k+1}\leq \frac{\beta-\epsilon B_G-\epsilon\gamma\lambda_{max}(A^{\top}A)}{B}m^{k}\leq m^{k}\leq 1,\quad\mbox{with some}\quad 0<\delta\leq1.
\end{equation}
\item[(ii)] Let $\{\tilde{w}^{k}\}$ be the sequence generated by the VAPP-AL:
\begin{eqnarray}
\tilde{w}^k=\left(\begin{array}{c}\tilde{u}^k\\ \tilde{p}^k\end{array}\right)=\left(\begin{array}{c}u^{k+1}\\ p^k+\gamma(Au^{k+1}-b)\end{array}\right).
\end{eqnarray}
\end{itemize}
For any integer number $t>0$, define $\bar{w}_{t}$ as $\bar{w}_{t}=\frac{1}{\sigma^t}\sum_{k=0}^{t}m^k\tilde{w}^k$.
with $\sigma^t=\sum_{k=0}^{t}m^{k}$,\\
Then it holds that
\begin{eqnarray}
H(\bar{w}_{t})-H(w)+\langle F(w), \Omega(\bar{w}_{t})-\Omega(w)\rangle\leq \frac{1}{(t+1)\delta}[\frac{B}{2\epsilon}\|u-u^{0}\|^{2}+\frac{1}{2\rho}\|p-p^{0}\|^{2}],
\end{eqnarray}
i.e., $\bar{w}_{t}$ is an approximate saddle point of $L$ with the accuracy of $O(1/t)$
\end{theorem}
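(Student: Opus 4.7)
The plan is to obtain a per-iteration estimate for $\epsilon m^k[H(\tilde w^k) - H(w) + \langle F(w), \Omega(\tilde w^k) - \Omega(w)\rangle]$ that telescopes in a varying Bregman-type distance, sum it from $k=0$ to $t$, and then transfer the bound to the ergodic average $\bar w_t$ via convexity.

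I would start by instantiating the variational inequality~\eqref{eq:vi} with an arbitrary test point $u \in U$ (rather than the saddle point $u^*$ used in Lemma~\ref{lemma:KK-diff}) and combining it with the three-point inequality~\eqref{threePoint} for $G$. After adding $\epsilon\langle p, A(u^{k+1} - u)\rangle + \epsilon\langle b - Au, \tilde p^k - p\rangle$ (which equals $\epsilon\langle F(w), \Omega(\tilde w^k) - \Omega(w)\rangle$) to both sides, I would use $\tilde p^k - p^k = \gamma(Au^{k+1}-b)$ together with the dual update $p^{k+1}-p^k = \rho(Au^{k+1}-b)$ to consolidate the multiplier terms, apply the Bregman identity $\langle\nabla K^k(u^{k+1})-\nabla K^k(u^k),u-u^{k+1}\rangle = D_K^k(u,u^k) - D_K^k(u,u^{k+1}) - D_K^k(u^{k+1},u^k)$, and split the cross term $\epsilon\gamma\langle A(u^k - u^{k+1}), A(u - u^{k+1})\rangle$ via the polarization identity $2\langle a,b\rangle = \|a\|^2 + \|b\|^2 - \|a-b\|^2$. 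Defining $\mu^k(u,v) := D_K^k(u,v) - \tfrac{\epsilon\gamma}{2}\|A(u-v)\|^2$, these manipulations yield, after dropping the two nonpositive residuals $-\tfrac{1}{2}(\beta^k - \epsilon B_G - \epsilon\gamma\lambda_{\max}(A^\top A))\|u^k-u^{k+1}\|^2$ (nonpositive by~\eqref{para-choice}) and $\tfrac{\epsilon(\rho-2\gamma)}{2}\|Au^{k+1}-b\|^2$ (nonpositive since $\rho < (1+\delta)\gamma \le 2\gamma$),
\[
\epsilon\bigl[H(\tilde w^k) - H(w) + \langle F(w), \Omega(\tilde w^k) - \Omega(w)\rangle\bigr] \le \mu^k(u, u^k) - \mu^k(u, u^{k+1}) + \tfrac{\epsilon}{2\rho}\bigl(\|p^k - p\|^2 - \|p^{k+1} - p\|^2\bigr).
\]

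Next I would multiply by $m^k$ and sum over $k=0,\ldots,t$. The key observation is that $\mu^k$ satisfies $\tfrac{\beta - \epsilon B_G - \epsilon\gamma\lambda_{\max}(A^\top A)}{2}\|u-v\|^2 \le \mu^k(u,v) \le \tfrac{B}{2}\|u-v\|^2$, so Lemma~\ref{lemma:unit} applies with $\alpha = \beta - \epsilon B_G - \epsilon\gamma\lambda_{\max}(A^\top A)$ and $T = B$; under the ratio condition~\eqref{eq:mk-rate} it yields $m^{k+1}\mu^{k+1}(u, u^{k+1}) \le m^k \mu^k(u, u^{k+1})$. Telescoping then bounds $\sum_{k=0}^t m^k[\mu^k(u,u^k) - \mu^k(u,u^{k+1})] \le m^0 \mu^0(u,u^0) \le \tfrac{B}{2}\|u-u^0\|^2$, and the analogous argument for $\sum_{k=0}^t m^k(\|p^k-p\|^2 - \|p^{k+1}-p\|^2)$, using only monotonicity $m^{k-1} \ge m^k$, gives $\le m^0\|p^0-p\|^2 \le \|p^0-p\|^2$. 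Dividing by $\epsilon\sigma^t$ and invoking the convexity of $H = G+J$ together with the affinity of $w' \mapsto \langle F(w), \Omega(w')\rangle$ to bring $\bar w_t = \sigma^{-t}\sum_k m^k \tilde w^k$ inside $H$ and $\Omega$, the stated inequality follows; the lower bound $\sigma^t \ge (t+1)\delta$ coming from $m^k \ge \delta$ then converts the estimate into the advertised $O(1/t)$ rate.

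The main obstacle is identifying the correct distance-like function $\mu^k$: using only the pure Bregman divergence $D_K^k$, the polarization split of $\epsilon\gamma\langle A(u^k-u^{k+1}), A(u-u^{k+1})\rangle$ leaves behind a residual $\tfrac{\epsilon\gamma}{2}[\|A(u-u^{k+1})\|^2 - \|A(u-u^k)\|^2]$ that sits outside the Bregman telescope, and under the decreasing weights $m^k$ its partial sums have the wrong sign to be bounded by initial quantities. Folding $-\tfrac{\epsilon\gamma}{2}\|A(u-\cdot)\|^2$ directly into the Bregman divergence to form $\mu^k$ absorbs this residual into the main telescope, at the cost of a stricter lower bound on $\mu^k$ and hence the tighter ratio requirement~\eqref{eq:mk-rate} needed for Lemma~\ref{lemma:unit} to apply consistently across varying core functions.
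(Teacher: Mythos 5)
Your proposal is correct and follows essentially the same route as the paper: a per-iteration variational-inequality estimate telescoped through a varying distance-like function via Lemma~\ref{lemma:unit}, a dual estimate obtained from the update identity $p^{k+1}-p^k=\rho(Au^{k+1}-b)$, and Jensen's inequality to pass to the ergodic average with $\sigma^t\geq(t+1)\delta$. The only (immaterial) difference is bookkeeping: the paper folds the Bregman divergence of $G$ into its merit function $\Phi^k$ built from $f^k=\frac{1}{\epsilon}K^k-G-\frac{\gamma}{2}\|A\cdot\|^2$, whereas you handle $G$ by the three-point inequality~\eqref{threePoint} and absorb only the quadratic penalty into $\mu^k$, dropping the same nonpositive residual and arriving at the same bound.
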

\begin{proof}
From the convexity of $(G+J)(u)$, for every $u\in U, p\in R^m$, we have
\begin{eqnarray}\label{eq:VI_rate}
&&H(\bar{w}_{t})-H(w)+\langle F(w), \Omega(\bar{w}_{t})-\Omega(w)\rangle\nonumber\\
&\leq&\frac{1}{\sigma^t}\sum_{k=0}^{t}m^{k}(G+J)(\tilde{u}^k)-(G+J)(u)+\frac{1}{\sigma^t}\sum_{k=0}^{t}m^{k}\langle p,A\tilde{u}^k-b\rangle-\langle p,Au-b\rangle+\langle b-Au,\frac{1}{\sigma^t}\sum_{k=0}^{t}m^{k}\tilde{p}^k-p\rangle\nonumber\\
&=&\frac{1}{\sigma^t}\sum_{k=0}^{t}m^{k}\bigg{[}H(\tilde{w}^{k})-H(w)+\langle F(w), \Omega(\tilde{w}^{k})-\Omega(w)\rangle\bigg{]}.
\end{eqnarray}
To get the upper bound of the variational term in the square brackets of~\eqref{eq:VI_rate}, Let $f^k(\cdot)=\frac{1}{\epsilon}K^k(\cdot)-G(\cdot)-\frac{\gamma}{2}\|Au\|^2$, construct the follows merit function
\begin{eqnarray*}
\Phi^{k}(u,v)&=&f^k(u)-f^k(v)-\langle\nabla f^k(v),u-v\rangle, \forall u,v\in U.
\end{eqnarray*}
From Assumptions of Theorem~\ref{theo:general-convergence}, we obtain the upper bound and lower bound for $\Phi^{k}(u,v)$ as follows:
\begin{eqnarray*}
&&\Phi^{k}(u,v)\leq\frac{B^k}{2\epsilon}\|u-v\|^2\leq\frac{B}{2\epsilon}\|u-v\|^2, \;\forall u,v\in U,\\
&&\Phi^{k}(u,v)\geq\frac{\beta^k-\epsilon B_G-\epsilon\lambda_{max}(A^{\top}A)}{2\epsilon}\|u-v\|^2\geq\frac{\beta-\epsilon B_G-\epsilon\lambda_{max}(A^{\top}A)}{2\epsilon}\|u-v\|^2\geq 0, \;\forall u,v\in U.
\end{eqnarray*}
Now we ready to estimate the upper bound of the variation term of the square brackets of~\eqref{eq:VI_rate}.
\begin{eqnarray}\label{eq:VI_bound0}
 &&H(\tilde{w}^{k})-H(w)+\langle F(w), \Omega(\tilde{w}^{k})-\Omega(w)\rangle\nonumber\\
&\leq&\langle\nabla G(\tilde{u}^{k}), \tilde{u}^{k}-u\rangle+J(\tilde{u}^{k})-J(u)+\langle p,A(\tilde{u}^{k}-u)\rangle+\langle b-Au, \tilde{p}^k-p\rangle\nonumber\\
&=&\langle\nabla G(\tilde{u}^{k}), \tilde{u}^{k}-u\rangle+J(\tilde{u}^{k})-J(u)+\langle \tilde{p}^{k},A(\tilde{u}^{k}-u)\rangle+\langle b-A\tilde{u}^{k}, \tilde{p}^k-p\rangle\nonumber\\
&&+\langle p-\tilde{p}^k, A(\tilde{u}^k-u)\rangle+\langle A(\tilde{u}^k-u), \tilde{p}^k-p\rangle\nonumber\\
&=&\langle\nabla G(\tilde{u}^{k}), \tilde{u}^{k}-u\rangle+J(\tilde{u}^{k})-J(u)+\langle \tilde{p}^{k},A(\tilde{u}^{k}-u)\rangle+\langle b-A\tilde{u}^{k}, \tilde{p}^k-p\rangle.
\end{eqnarray}
To estimate the first three terms of right hand side of~\eqref{eq:VI_bound0}, using variational inequality~\eqref{eq:vi} and $\tilde{u}^k=u^{k+1}$, $\tilde{p}^k=p^k+\gamma(Au^{k+1}-b)$, we have
\begin{eqnarray}\label{eq:VI1}
&&\langle\nabla G(\tilde{u}^{k}), \tilde{u}^{k}-u\rangle+J(\tilde{u}^{k})-J(u)+\langle \tilde{p}^{k},A(\tilde{u}^{k}-u)\rangle\nonumber\\
&\leq&-\frac{1}{\epsilon}\langle\nabla K^k(u^{k})-(K^k)'(u^{k+1}),u-u^{k+1}\rangle+\langle\nabla G(u^{k})-\nabla G(u^{k+1}),u-u^{k+1}\rangle\nonumber\\
&&+\gamma\langle A(u^k-u^{k+1}),A(u-u^{k+1})\rangle
\end{eqnarray}
Then using three point identity Lemma 3.1 of~\cite{ChenTeboulle93}, we obtain
\begin{eqnarray}\label{eq:VI2}
  &&-\frac{1}{\epsilon}\langle\nabla K^k(u^{k})-\nabla K^k(u^{k+1}),u-u^{k+1}\rangle+\langle\nabla G(u^{k})-\nabla G(u^{k+1}),u-u^{k+1}\rangle\nonumber\\
&&+\gamma\langle A(u^k-u^{k+1}),A(u-u^{k+1})\rangle\nonumber\\
&=&\Phi^{k}(u,u^{k})-\Phi^{k}(u,u^{k+1})-\Phi^{k}(u^{k+1},u^{k})\nonumber\\
&\leq&\Phi^{k}(u,u^{k})-\Phi^{k}(u,u^{k+1}).
\end{eqnarray}
As a result, we have
\begin{eqnarray}\label{eq:VI_solution1}
  &&\langle\nabla G(\tilde{u}^{k}), \tilde{u}^{k}-u\rangle+J(\tilde{u}^{k})-J(u)+\langle \tilde{p}^{k},A(\tilde{u}^{k}-u)\rangle\nonumber\\
&\leq&\Phi^{k}(u,u^{k})-\Phi^{k}(u,u^{k+1}).
\end{eqnarray}
For dual hand, by condition~\eqref{para-choice} for $\rho$, we estimate the last term of right hand side of~\eqref{eq:VI_bound0}:
\begin{eqnarray}\label{eq:VI_solution2}
\langle b-A\tilde{u}^{k}, \tilde{p}^k-p\rangle&=& \frac{1}{\rho}\langle p^k-p^{k+1},  \tilde{p}^k-p\rangle\nonumber\\
&=&\frac{1}{2\rho}[\|p-p^{k}\|^2-\|p-p^{k+1}\|^2]+\frac{1}{2\rho}[\|p^{k+1}-\tilde{p}^k\|^2-\|p^k-\tilde{p}^k\|^2]\nonumber\\
&=&\frac{1}{2\rho}[\|p-p^{k}\|^2-\|p-p^{k+1}\|^2+\|p^{k}+\rho(Au^{k+1}-b)-p^k-\gamma(Au^{k+1}-b)\|^2\nonumber\\
  &&-\|p^k-p^k-\gamma(Au^{k+1}-b)\|^2]\nonumber\\
&=&\frac{1}{2\rho}[\|p-p^{k}\|^2-\|p-p^{k+1}\|^2+(\rho-\gamma)^2\|Au^{k+1}-b\|^2-\gamma^2\|Au^{k+1}-b\|^2]\nonumber\\
&=&\frac{1}{2\rho}[\|p-p^{k}\|^2-\|p-p^{k+1}\|^2+\rho(\rho-2\gamma)\|Au^{k+1}-b\|^2]\nonumber\\
&\leq&\frac{1}{2\rho}[\|p-p^{k}\|^2-\|p-p^{k+1}\|^2]
\end{eqnarray}
Together~\eqref{eq:VI_solution1} and~\eqref{eq:VI_solution2}, we have
\begin{eqnarray}\label{eq:VI_solution}
  &&\langle\nabla G(\tilde{u}^{k}), \tilde{u}^{k}-u\rangle+J(\tilde{u}^{k})-J(u)+\langle \tilde{p}^{k},A(\tilde{u}^{k}-u)\rangle+\langle b-A\tilde{u}^{k}, \tilde{p}^k-p\rangle\nonumber\\
&\leq&\Phi^{k}(u,u^{k})-\Phi^{k}(u,u^{k+1})+\frac{1}{2\rho}[\|p-p^{k}\|^2-\|p-p^{k+1}\|^2].
\end{eqnarray}
Since parameter $m^k$ satisfies condition~\eqref{eq:mk-rate}, by Lemma~\ref{lemma:unit} with $\mu^k(u,v)=\Phi^{k}(u,v)$, we have
\begin{eqnarray*}
m^{k+1}\Phi^{k+1}(u,v)\leq m^{k}\Phi^{k}(u,v),
\end{eqnarray*}
which follows the fact
\begin{eqnarray}\label{eq:eachiteration}
&&m^k[H(\tilde{w}^{k})-H(w)+\langle F(w), \Omega(\tilde{w}^{k})-\Omega(w)\rangle]\nonumber\\
&\leq& m^{k}[\Phi^k(u,u^k)-\Phi^k(u,u^{k+1})+\frac{1}{2\rho}\|p-p^k\|^2-\frac{1}{2\rho}\|p-p^{k+1}\|^2]\nonumber\\
&\leq& m^{k}[\Phi^k(u,u^{k})+\frac{1}{2\rho}\|p-p^{k}\|^2]-m^{k+1}[\Phi^{k+1}(u,u^{k+1})+\frac{1}{2\rho}\|p-p^{k+1}\|^2].
\end{eqnarray}
Summing the inequality~\eqref{eq:eachiteration} over $k=0,1,\ldots,n$, we obtain
\begin{eqnarray*}
  &&\frac{1}{\sigma^t}\sum_{k=0}^{t}m^{k}\bigg{[}H(\tilde{w}^{k})-H(w)+\langle F(w), \Omega(\tilde{w}^{k})-\Omega(w)\rangle\bigg{]}\\
&\leq& \frac{m^0\bigg{[}\Phi^k(u,u^{0})+\frac{1}{2\rho}\|p-p^{0}\|^2\bigg{]}}{\sigma^t}
\end{eqnarray*}
Together with $\sigma^t\geq (t+1)\delta$ and~\eqref{eq:VI_rate}, we observe that
\begin{eqnarray*}
&&H(\bar{w}_{t})-H(w)+\langle F(w), \Omega(\bar{w}_{t})-\Omega(w)\rangle\\
&\leq& \frac{1}{(t+1)\delta}\bigg{[}\frac{B}{2\epsilon}\|u-u^{0}\|^2+\frac{1}{2\rho}\|p-p^{0}\|^2\bigg{]},
\end{eqnarray*}
and this complete the proof.
\end{proof}
Observe that Theorem~\ref{thm:ergodic_iteration_complexity} prompts VAPP-AL has the convergence rate $O(1/t)$ in the worst case.
\subsection{Convergence Rate Analysis in non-Ergodic sense}
In this subsection, we analyze the iteration complexity of VAPP-AL in non-ergodic sense. Based on~\eqref{eq:10}, we know that $\lim_{k\to \infty}\|u^{k+1}-u^{k}\|=0$ is a necessary condition for the convergence of VAPP-AL. Thus, $\|u^{k+1}-u^{k}\|^2+\|p^{k+1}-p^{k}\|$ can be used as a quantity to measure
the degree of convergence of the sequence $\{u^k,p^k\}$ to a critical point. The following theorem provides the convergence rate on $\|u^{k+1}-u^{k}\|^2+\|p^{k+1}-p^{k}\|$.
\begin{theorem}\label{theo:convergence-rate} Let the assumptions of Theorem \ref{theo:general-convergence} be satisfied, then there exists a small positive $\nu$ such that
\begin{equation*}
\min_{0\leq k\leq t}\|u^{k+1}-u^{k}\|^{2}+\|p^{k+1}-p^{k}\|^{2}\leq \frac{\Lambda^0(u^{0},p^{0})}{(t+1)\nu}.
\end{equation*}
where the sequence $\{(u^k,p^k)\}$ is generated by VAPP-AL, $\Lambda^{0}(u^0,p^0)$ is defined by~\eqref{func:Lambda} with $k=0$, $(u,p)=(u^0,p^0)$.
\end{theorem}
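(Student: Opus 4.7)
The plan is to reuse the descent inequality already established in the proof of Theorem~\ref{theo:general-convergence} and telescope. Recall from~\eqref{up-bound-Lambda} that
\begin{equation*}
\Lambda^{k+1}(u^{k+1},p^{k+1})-\Lambda^{k}(u^{k},p^{k}) \le -\,\frac{m^{k}}{2}\bigl(\beta^{k}-\epsilon(B_{G}+\gamma\lambda_{\max}(A^{\top}A))\bigr)\|u^{k+1}-u^{k}\|^{2} - \frac{m^{k}\epsilon}{2}\bigl((1+\delta)\gamma-\rho\bigr)\|Au^{k+1}-b\|^{2}.
\end{equation*}
Under the parameter choice~\eqref{para-choice} and Assumption~\ref{assump2}, the two coefficients are strictly positive, and since $m^{k}\ge \delta>0$ and $\beta^{k}\ge \beta$, they are bounded away from zero by constants independent of $k$.

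Next, I would convert the constraint-violation term into a dual-increment term using the dual update~\eqref{VAPP:dual-update}, which gives $p^{k+1}-p^{k}=\rho(Au^{k+1}-b)$, hence $\|Au^{k+1}-b\|^{2}=\|p^{k+1}-p^{k}\|^{2}/\rho^{2}$. Substituting, one obtains positive constants $\nu_{1}$ and $\nu_{2}$ such that
\begin{equation*}
\Lambda^{k+1}(u^{k+1},p^{k+1})-\Lambda^{k}(u^{k},p^{k})\le -\nu_{1}\|u^{k+1}-u^{k}\|^{2}-\nu_{2}\|p^{k+1}-p^{k}\|^{2}\le -\nu\bigl(\|u^{k+1}-u^{k}\|^{2}+\|p^{k+1}-p^{k}\|^{2}\bigr),
\end{equation*}
with $\nu=\min\{\nu_{1},\nu_{2}\}>0$.

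Finally I would telescope this inequality from $k=0$ to $k=t$ and use the nonnegativity of $\Lambda^{t+1}(u^{t+1},p^{t+1})$ established in~\eqref{nonegative-measure-func} to obtain
\begin{equation*}
\nu\sum_{k=0}^{t}\bigl(\|u^{k+1}-u^{k}\|^{2}+\|p^{k+1}-p^{k}\|^{2}\bigr)\le \Lambda^{0}(u^{0},p^{0})-\Lambda^{t+1}(u^{t+1},p^{t+1})\le \Lambda^{0}(u^{0},p^{0}).
\end{equation*}
Bounding the minimum by the average of the $(t+1)$ summands then delivers the claimed $O(1/t)$ rate.

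There is no real obstacle here beyond bookkeeping; the heavy lifting was done in the descent inequality~\eqref{up-bound-Lambda}, and this non-ergodic rate is essentially its corollary. The only subtlety is to verify that the lower bounds on $m^{k}$, $\beta^{k}$, and the gap $(1+\delta)\gamma-\rho$ are uniform in $k$ so that a single constant $\nu$ works throughout—an immediate consequence of Assumption~\ref{assump2}, condition~\eqref{eq:mk-convergence}, and the strict inequalities in~\eqref{para-choice}.
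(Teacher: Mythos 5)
Your proposal is correct and follows essentially the same route as the paper: both start from the descent inequality~\eqref{up-bound-Lambda}, replace $\|Au^{k+1}-b\|^{2}$ by $\|p^{k+1}-p^{k}\|^{2}/\rho^{2}$ via the dual update, extract uniform constants $\nu_{1},\nu_{2}$ (using $m^{k}\ge\delta$ and $\beta^{k}\ge\beta$), set $\nu=\min\{\nu_{1},\nu_{2}\}$, telescope, and bound the minimum by the average. No gaps.
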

\begin{proof}
\noindent From inequality~\eqref{up-bound-Lambda}, we have
\begin{eqnarray*}
    && \Lambda^{k+1}(u^{k+1},p^{k+1})-\Lambda^k(u^{k},p^{k})\nonumber\\
&\leq&m_k\left( \frac{1}{2}\left(\epsilon\left(B_{G}+\gamma \lambda_{max}(A^{\top}A)\right)-\beta^{k}\right)\|u^{k}-u^{k+1}\|^{2} +\frac{\epsilon}{2}(\rho-(1+\delta)\gamma )\|Au^{k+1}-b\|^{2}\right)\\
&\leq&m_k\left( \frac{1}{2}\left(\epsilon\left(B_{G}+\gamma \lambda_{max}(A^{\top}A)\right)-\beta^{k}\right)\|u^{k}-u^{k+1}\|^{2} +\frac{\epsilon}{2\rho^2}(\rho-(1+\delta)\gamma )\|p^{k}-p^{k+1}\|^{2}\right)
\end{eqnarray*}
and $\lim\limits_{k \to \infty} \Lambda^k(u^{k},p^{k}) = 0$. Since $\frac{1}{2}\left(\epsilon\left(B_{G}+\gamma \lambda_{max}(A^{\top}A)\right)-\beta^{k}\right) < 0$ and $\frac{\epsilon}{2\rho^2}(\rho-(1+\delta)\gamma )< 0$, there exist positive numbers $\nu_1$ and $\nu_2$ such that
$$
\frac{m^k}{2}\left(\epsilon(B_{G}+\gamma\lambda_{max}(A^{\top}A))-\beta^k \right) \le - \nu_1 \quad\mbox{and}\quad\frac{m^k\epsilon(\rho-(1+\delta)\gamma )}{2\rho^2} \le - \nu_2 .
$$
By letting $\nu = \min\{\nu_1, \nu_2\}$, we have that
\begin{equation}\label{eq:sum1}
\Lambda^{k+1}(u^{k+1},p^{k+1})-\Lambda^k(u^{k},p^{k}) \le -\nu\left( \|u^{k}-u^{k+1} \|^{2}+\|p^{k}-p^{k+1}\|^{2} \right)
\end{equation}
Summing (\ref{eq:sum1}) over $k$ and taking limit yields
\begin{eqnarray*}
\sum_{k=0}^{t}\nu\left( \|u^{k}-u^{k+1}\|^{2}+\|p^{k}-p^{k+1}\|^{2} \right)\le \Lambda^0(u^{0},p^{0}).
\end{eqnarray*}
\noindent Moreover, the above inequity implies that
\begin{equation*}
\min_{0\leq k\leq t}[\|u^{k+1}-u^{k}\|^{2}+\|p^{k+1}-p^{k}\|^{2}]\leq \frac{\Lambda^0(u^{0},p^{0})}{(t+1)\nu}.
\end{equation*}
\end{proof}
\section{Convergence Rate Analysis with Jacobian Regularization Quadratic Core Function}\label{sec:rate-quadratic-core}
In this section, the core function is endowed with the quadratic form:
\begin{equation}\label{eq:ProximalJacobicore}
K^{k}(u)=\sum_{i=1}^{N}\frac{\theta_{i}}{2}\|A_{i}u_{i}+(\sum_{j\neq i}A_{j}u_{j}^{k}-b)\|^{2}+ \sum_{i=1}^N \frac{\alpha_i}{2}\|P_i u_i\|^2,
\end{equation}
where $\theta_i \ge 0$ and $\alpha_i \ge 0$ for all $i=1,\dots, N$. We shall show how to improve convergence rate when the core function has the above quadratic form.
To facilitate our discussion, we assume $G(\cdot)$ is twice differentiable and let
\begin{eqnarray*}
H:=\frac{1}{\epsilon}\left(\begin{array}{ccccc}
  \theta_1 A_{1}^{T}A_{1} + \alpha_1 P_{1}^{T}P_{1}&      &              &      &              \\
                &\ddots&              &      &              \\
                &      &\theta_i A_{i}^{T}A_{i} + \alpha_i P_{i}^{T}P_{i}&      &              \\
                &      &              &\ddots&              \\
                &      &              &      &\theta_N A_{N}^{T}A_{N} + \alpha_N P_{N}^{T}P_{N}
 \end{array}
  \right)
\end{eqnarray*}
\begin{eqnarray*}
Q^{k}:=\int_{0}^{1}\nabla^2G(u^{k}+\tau(u^{k-1}-u^{k}))d\tau\quad\mbox{and}\quad \tilde{H}^{k}:=H-\gamma A^{T}A-Q^{k}
\end{eqnarray*}
From Assumption~\ref{assump1}, we have that the hessian matrix of $G(\cdot)$ is bounded above, i.e., there exists some $0 < B_G < \infty$ such that
$0 \preceq \nabla^2G(u) \preceq B_G I,\;\forall \; u \in U$. Consequently, it holds that
$0  \preceq Q^{k} \preceq B_G I$. We denote
\begin{equation}\label{bounds-notation}
\underline{H}: = H-\gamma A^{T}A - B_G I,\quad \overline{H}: = H-\gamma A^{T}A.
\end{equation}
Then, obviously
$\underline{H} \preceq \tilde{H}^{k} \preceq \overline{H} $ for all $k$.

The following proposition provides convergence of VAPP-AL with Jacobian regularization quadratic core function, whose condition is weaker than that of Theorem~\ref{theo:general-convergence}.
\begin{proposition}\label{prop:choice-core} When the core function is given by~\eqref{eq:ProximalJacobicore}, Assumption~\ref{assump2} is automatically satisfied if for each index $i=1,\dots, N$, we either have $A_{i}$ has full column rank with $\theta_i > 0$ or $P_{i}$ has full column rank with $\alpha_i >0$.\\
(i) Moreover, suppose $\underline{H}\succeq 0$ and $\rho\leq (1+\delta)\gamma$, with some $0<\delta\leq 1$ then the convergence of VAPP-AL is guaranteed.\\
(ii) In addition, the convergent condition $0<\epsilon<\beta^k/(B_G+\gamma\cdot\lambda_{\max}(A^{\top}A))$ in Theorem~\ref{theo:general-convergence} implies that $\underline{H}\succeq 0$. That is to say VAPP-AL actually converges under a weaker condition with Jacobian regularization quadratic core function.

\end{proposition}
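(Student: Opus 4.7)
The plan is to address the three assertions separately, with the first and third being short direct arguments and assertion (i) being the substantive part.

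For the very first claim, I would just compute the Hessian:
\begin{equation*}
\nabla^2 K^{k}(u) = \mathrm{diag}\bigl(\theta_i A_i^{\top} A_i + \alpha_i P_i^{\top} P_i\bigr)_{i=1}^{N} = \epsilon H,
\end{equation*}
which is block-diagonal and independent of $u$ and $k$. Under the hypothesized rank conditions each diagonal block is positive definite (the block dominates $\theta_i A_i^{\top} A_i$ or $\alpha_i P_i^{\top} P_i$, at least one of which is PD), hence $H \succ 0$; taking $\beta^{k}\equiv \epsilon\lambda_{\min}(H)$ and $B^{k}\equiv \epsilon\lambda_{\max}(H)$ verifies Assumption~\ref{assump2}.

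For assertion (ii), the scalar-to-matrix comparison is immediate: the Theorem~\ref{theo:general-convergence} hypothesis $\epsilon(B_{G}+\gamma\lambda_{\max}(A^{\top}A)) < \beta^{k} = \epsilon\lambda_{\min}(H)$ rearranges to $\lambda_{\min}(H) > B_{G}+\gamma\lambda_{\max}(A^{\top}A)$; since $H \succeq \lambda_{\min}(H)I$ while $B_{G} I + \gamma A^{\top} A \preceq \bigl(B_{G}+\gamma\lambda_{\max}(A^{\top}A)\bigr)I$, one concludes $H \succ B_{G} I + \gamma A^{\top} A$, i.e.\ $\underline{H}\succ 0$, which is strictly stronger than the hypothesis of (i).

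For assertion (i), I would replay the Lyapunov argument of Theorem~\ref{theo:general-convergence}, exploiting the quadratic structure of $K^{k}$ to carry the analysis in matrix-weighted norms rather than through scalar Lipschitz estimates. Since $\nabla^{2}K^{k}=\epsilon H$ is constant in $k$, the Bregman measure reduces to $D_{K}^{k}(u,v)=\tfrac{\epsilon}{2}\|u-v\|_{H}^{2}$, so one may use the Lyapunov function
\begin{equation*}
\Lambda^{k}(u,p) = m^{k}\Bigl(\tfrac{\epsilon}{2}\|u^{*}-u\|_{H}^{2} + \tfrac{\epsilon}{2\rho}\|p-p^{*}\|^{2} - \tfrac{\gamma\epsilon}{2}\|Au-b\|^{2}\Bigr),
\end{equation*}
whose nonnegativity follows from $H-\gamma A^{\top}A \succeq B_{G} I \succeq 0$ under $\underline{H}\succeq 0$. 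In Lemma~\ref{lemma:KK-diff} the only place where the matrix structure was lost is the scalar Lipschitz bound on $\langle\nabla G(u^{k}),u^{*}-u^{k+1}\rangle$; keeping the Bregman-type remainder $R^{k}(u^{k+1}):=G(u^{k+1})-G(u^{k})-\langle\nabla G(u^{k}),u^{k+1}-u^{k}\rangle\le \tfrac{B_{G}}{2}\|u^{k+1}-u^{k}\|^{2}$ and telescoping exactly as in Theorem~\ref{theo:general-convergence} (with $\{m^{k}\}$ satisfying $m^{k+1}/m^{k}\ge \delta$) yields the descent inequality
\begin{equation*}
\Lambda^{k+1}(u^{k+1},p^{k+1}) - \Lambda^{k}(u^{k},p^{k}) \le -\tfrac{m^{k}\epsilon}{2}\|u^{k+1}-u^{k}\|_{\underline{H}}^{2} + \tfrac{m^{k}\epsilon}{2}\bigl(\rho-(1+\delta)\gamma\bigr)\|Au^{k+1}-b\|^{2},
\end{equation*}
which is non-positive under the hypotheses. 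Summability then gives $\|Au^{k+1}-b\|\to 0$, $\|u^{k+1}-u^{k}\|_{\underline{H}}\to 0$, and boundedness of the iterates, after which the cluster-point analysis at the end of the proof of Theorem~\ref{theo:general-convergence} identifies any cluster point of $\{(u^{k},p^{k})\}$ as a saddle point of $L$.

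The technical point I expect to require care is upgrading $\|u^{k+1}-u^{k}\|_{\underline{H}}\to 0$ to $\|u^{k+1}-u^{k}\|\to 0$ when $\underline{H}$ is merely positive semidefinite. The way around is to combine it with $\|A(u^{k+1}-u^{k})\|\to 0$ (which follows from $\|Au^{k+1}-b\|\to 0$) to get $\|u^{k+1}-u^{k}\|_{\underline{H}+\gamma A^{\top}A}^{2} = \|u^{k+1}-u^{k}\|_{H-B_{G}I}^{2}\to 0$; together with the strict positive definiteness of $H$ established in the first assertion, this is enough to pass to the limit in the VI~\eqref{eq:vi} characterizing $u^{k+1}$ along any cluster subsequence and close the convergence argument.
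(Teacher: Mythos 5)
Your proposal follows the same route as the paper on all three claims. The opening claim and part (ii) are the same direct computations the paper makes: it identifies $\beta^{k}/\epsilon$ with $\lambda_{\min}(H)$ and bounds $\langle \underline{H}x,x\rangle \geq (\lambda_{\min}(H)-\gamma\lambda_{\max}(A^{\top}A)-B_{G})\|x\|^{2}$ exactly as you do. For part (i) you use the identical Lyapunov function $\Lambda^{k}$ and land on the paper's descent inequality~\eqref{eq:quadraticconvergence}, namely $\Lambda^{k+1}-\Lambda^{k}\leq m^{k}\bigl(-\tfrac{\epsilon}{2}\|u^{k}-u^{k+1}\|_{\underline{H}}^{2}+\tfrac{\epsilon}{2}(\rho-(1+\delta)\gamma)\|Au^{k+1}-b\|^{2}\bigr)$, so up to that point the two arguments coincide.

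The one place you go beyond the paper is the endgame, and there your patch does not quite close. The paper disposes of this step with ``the following convergence proof is similar to the proof of Theorem~\ref{theo:general-convergence}''; you correctly observe that under $\underline{H}\succeq 0$ the telescoping only yields the seminorm statement $\|u^{k+1}-u^{k}\|_{\underline{H}}\to 0$. Your fix combines this with $\|A(u^{k+1}-u^{k})\|\to 0$ to get $\|u^{k+1}-u^{k}\|_{H-B_{G}I}^{2}\to 0$, but $\underline{H}\succeq 0$ only guarantees $H-B_{G}I\succeq \gamma A^{\top}A\succeq 0$; when $B_{G}>0$ and $A$ lacks full column rank this matrix can be singular, so the positive definiteness of $H$ alone does not upgrade the conclusion to $\|u^{k+1}-u^{k}\|\to 0$, which you do need to pass to the limit in the variational inequality~\eqref{eq:vi} (e.g.\ for the $J(u^{k+1})$ and $\nabla G(u^{k})$ terms). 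This residual gap is shared by, and indeed hidden in, the paper's own proof; closing it honestly requires either $B_{G}=0$, or strengthening $\underline{H}\succeq 0$ to $\underline{H}\succ 0$ (which, by your own part (ii), is what the condition of Theorem~\ref{theo:general-convergence} delivers). Within the standard the paper sets for itself, your proposal is at least as rigorous and takes essentially the same approach.
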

\begin{proof} (i) For $K^k(u)$ is given by~\eqref{eq:ProximalJacobicore} and $\Lambda^{k}(u,p)$ is defined in~\eqref{func:Lambda}, from~\eqref{first-term} and~\eqref{second-term}, we have that
\begin{eqnarray}
    &&\Lambda^{k+1}(u^{k+1},p^{k+1})-\Lambda^k(u^{k},p^{k})\\
&\leq& m^k\big{(}  -\frac{1}{2}\sum_{i=1}^{N}\|(\theta_i A_{i}+\alpha_i P_{i})(u_{i}^{k}-u_{i}^{k+1})\|^{2}+\frac{B_{G}\epsilon}{2}\|u^{k}-u^{k+1}\|^{2} + \frac{\epsilon}{2}\gamma \|A(u^{k+1}-u^{k})\|^{2}\nonumber\\
&&+ \frac{\epsilon}{2} (\rho-(1+\frac{m^k}{m^{k+1}})\gamma)\|Au^{k+1}-b\|^2  \big{)} \nonumber\\
&=&m^k\left( -\frac{\epsilon}{2}\|u^{k}-u^{k+1}\|_{\underline{H}}^{2} +\frac{\epsilon}{2}(\rho-(1+\delta)\gamma )\|Au^{k+1}-b\|^{2}\right).\label{eq:quadraticconvergence}
\end{eqnarray}
For $\underline{H}\succeq 0$ and $\rho\leq (1+\delta)\gamma$, we derive from~\eqref{eq:quadraticconvergence} that
\begin{eqnarray*}
\Lambda^{k+1}(u^{k+1},p^{k+1})-\Lambda^k(u^{k},p^{k})\leq 0.
\end{eqnarray*}
The following convergence proof is similar to the proof of Theorem~\ref{theo:general-convergence}.
Therefore, the convergence is hold.\\
(ii) It is obviously that, when the core function is given by~\eqref{eq:ProximalJacobicore}, we have
$$\beta^{k}=\min_{i}\theta_{i}\lambda_{min}(A_{i}^{\top}A_{i})+\min_{i}\alpha_{i}\lambda_{min}(P_{i}^{\top}P_{i})$$
when the core function is given in~\eqref{eq:ProximalJacobicore}. Due to the definition of $H$, we have that
$$\lambda_{min}(H) = \frac{1}{\epsilon}\left(\min_{i}\theta_{i}\lambda_{min}(A_{i}^{\top}A_{i})+\min_{i}\alpha_{i}\lambda_{min}(P_{i}^{\top}P_{i})\right) = \frac{\beta_k}{\epsilon}$$
Consequently,
\begin{eqnarray*}
\langle \underline{H}x,x\rangle &=& \langle (H-\gamma A^{T}A-B_G I)x,x\rangle\\
                            &\geq& \left(\lambda_{min}(H)-\gamma\lambda_{max}(A^{\top}A)-B_G I\right)\|x\|^{2}\\
                            &=& \left(\frac{\beta^{k}}{\epsilon}-\gamma\lambda_{max}(A^{\top}A-B_G)\right)\|x\|^{2}\geq 0,
\end{eqnarray*}
where the last inequality follows from the fact that $\epsilon$ is chosen according to~\eqref{para-choice}.\\
Thus, $\underline{H}\succeq 0$. Therefore, the convergence condition of Theorem~\ref{theo:general-convergence} is a sufficient but not necessary condition of that of Proposition~\ref{prop:choice-core}.

\end{proof}

The subproblems and convergent conditions of VAPP-AL with some important Jacobian regularization quadratic core functions are summarized in Table~\ref{table}.

\begin{table}[!htp]
\begin{center}
\begin{tabular}{|l|l|l|l|}
\hline
\multicolumn{2}{|l|}{VAPP for (P)}&Subproblem (AP$^k$)& Convergent condition\\
\hline
\tabincell{l}{Proximal\\Jacobian\\VAPP\\(PJVAPP)}&
\tabincell{l}{for all $i$,\\$\theta_i>0$,\\$\alpha_i>0$,\\$\epsilon=1$}&
\tabincell{l}{$\langle \nabla G(u^k),u\rangle+J(u)+\langle p^k,Au-b\rangle$\\$+\sum_{i=1}^{N}\frac{\theta_i}{2}\|A_iu_i+\sum\limits_{j\neq i}A_ju_j^k-b\|^2+\sum_{i=1}^{N}\frac{\alpha_i}{2}\|P_i(u_i-u_i^k)\|^2$}&
\tabincell{l}{$\underline{H}\succeq 0$, $0<\rho\leq(1+\delta)\gamma$\\$A_{i}$ has full column rank\\ or \\$P_{i}$ has full column rank\\ for all $i$}\\
\hline
\tabincell{l}{Linear\\Jacobian\\VAPP\\(LJVAPP)}&
\tabincell{l}{for all $i$,\\$\theta_i>0$,\\$\alpha_i=0$}&
\tabincell{l}{$\langle \nabla G(u^k),u\rangle+J(u)+\langle p^k,Au-b\rangle$\\
$+\langle\gamma(1-\frac{1}{\epsilon})(Au^k-b),Au\rangle+\sum_{i=1}^{N}\frac{\theta_i}{2}\|A_iu_i+\sum\limits_{j\neq i}A_ju_j^k-b\|^2$}&
\tabincell{l}{$\underline{H}\succeq 0$, $0<\rho\leq(1+\delta)\gamma$\\$A_{i}$ has full column rank\\ for all $i$}\\
\hline
\tabincell{l}{Linear\\Proximal\\VAPP\\(LPVAPP)}&
\tabincell{l}{for all $i$,\\$\theta_i=0$,\\$\alpha_i>0$}&
\tabincell{l}{$\langle \nabla G(u^k),u\rangle+J(u)+\langle p^k,Au-b\rangle$\\
$+\langle\gamma(Au^k-b),Au\rangle+\sum_{i=1}^{N}\frac{\alpha_i}{2\epsilon}\|P_i(u_i-u_i^k)\|^2$}&
\tabincell{l}{$\underline{H}\succeq 0$, $0<\rho\leq(1+\delta)\gamma$\\$P_{i}$ has full column rank\\ for all $i$}\\
\hline
%\multicolumn{2}{|l|}{VAPP for (SP)}&Subproblem (AP$^k$)& Convergence condition\\
%\hline
%\tabincell{l}{Proximal\\Jacobian\\VAPP\\(PJVAPP)}&
%\tabincell{l}{for all $i$,\\$\theta_i>0$,\\$\alpha_i>0$,\\$\epsilon=1$}&
%\tabincell{l}{$J(u)+\langle p^k,Au-b\rangle$\\$+\sum_{i=1}^{N}\frac{\theta_i}{2}\|A_iu_i+\sum\limits_{j\neq i}A_ju_j^k-b\|^2+\sum_{i=1}^{N}\frac{\alpha_i}{2}\|P_i(u_i-u_i^k)\|^2$}&
%\tabincell{l}{$\underline{H}\succeq 0$, $0<\rho\leq(2-\delta)\gamma$\\$A_{i}$ has full column rank\\ or \\$P_{i}$ has full column rank\\ for all $i$}\\
%\hline
%\tabincell{l}{Linear\\Jacobian\\ VAPP\\(LJVAPP)}&
%\tabincell{l}{for all $i$,\\$\theta_i>0$,\\$\alpha_i=0$}&
%\tabincell{l}{$J(u)+\langle p^k,Au-b\rangle$\\
%$+\langle\gamma(1-\frac{1}{\epsilon})(Au^k-b),Au\rangle+\sum_{i=1}^{N}\frac{\theta_i}{2}\|A_iu_i+\sum\limits_{j\neq i}A_ju_j^k-b\|^2$}&
%\tabincell{l}{$\underline{H}\succeq 0$, $0<\rho\leq(2-\delta)\gamma$\\$A_{i}$ has full column rank\\ for all $i$}\\
%\hline
%\tabincell{l}{Linear\\Proximal\\VAPP\\(LPVAPP)}&
%\tabincell{l}{for all $i$,\\$\theta_i=0$,\\$\alpha_i>0$}&
%\tabincell{l}{$J(u)+\langle p^k,Au-b\rangle$\\
%$+\langle\gamma(Au^k-b),Au\rangle+\sum_{i=1}^{N}\frac{\alpha_i}{2\epsilon}\|P_i(u_i-u_i^k)\|^2$}&
%\tabincell{l}{$\underline{H}\succeq 0$, $0<\rho\leq(2-\delta)\gamma$\\$P_{i}$ has full column rank\\ for all $i$}\\
%\hline
\end{tabular}
\end{center}
\caption{Implementation of VAPP with Jacobian regularization quadratic core functions $K^k(\cdot)$\label{table}}
\end{table}
Since the separable programming (SP) is a special case of (P) with $J(u) = 0$, the convergent conditions in Table~\ref{table} can also be applied to (SP).
Moreover, the following remark shows that the VAPP-AL framework may include some existent Jacobian type augmented Lagrangian decomposition methods as special cases.
\begin{remark}
First note that, if $\epsilon=1$, $\theta_{i}=\gamma=\rho$, $\alpha_i =1$,
$\bar{P}_{i}=P_{i}^{\top}P_{i}$ for all $i=1,\dots, N$, and $\bar{P}_{i}\succ\gamma(\frac{N}{2-\rho/\gamma}-1)A_i^{\top}A_i$, then $\underline{H}\succeq 0$, PJVAPP is actually the PJADM in~\cite{DengLaiPengYin-2014}.\\
Second note that, if $\epsilon=1$, $\theta_{i}=\gamma=\rho$, $\alpha_i =s\gamma$,
$P_{i}=A_{i}$ for all $i=1,\dots, N$, and $s\geq N-1$, then $\underline{H}\succeq 0$ PJVAPP is actually the Algorithm (1.6) in~\cite{HeXuYuan16}.
\end{remark}
To illustrate the convergence rate of VAPP-AL with Jacobian regularization quadratic core functions, we quote a lemma in~\cite{DengLaiPengYin-2014}, which is useful to establish the $o(1/k)$ convergence rate in this paper.
\begin{lemma}\label{lemma:4}
\noindent If a sequence $\{a_{k}\}\subseteq R$ obeys: (1) $a_{k}\geq 0$; (2)$\sum_{k=1}^{\infty} a_{k}<+\infty$; (3) $a_{k}$ is monotonically non-increasing, then we have $a_{k}=o(1/k)$.
\end{lemma}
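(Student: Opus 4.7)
The plan is to prove $k\,a_k \to 0$ by exploiting the Cauchy tail condition for the convergent series together with the monotonicity hypothesis, via a classical ``tail-block'' argument.

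First, I would invoke condition (2): since $\sum_{k=1}^{\infty} a_k$ converges, the partial sums form a Cauchy sequence, so for every $\varepsilon > 0$ there exists $N$ such that for all $n \ge N$,
\begin{equation*}
\sum_{k=n+1}^{2n} a_k \;<\; \varepsilon.
\end{equation*}
Next, I would use condition (3) (monotone non-increasing) to compare each term in this tail block to its last term: for $n+1 \le k \le 2n$, we have $a_k \ge a_{2n}$, and therefore
\begin{equation*}
n \, a_{2n} \;\le\; \sum_{k=n+1}^{2n} a_k \;<\; \varepsilon,
\end{equation*}
so $2n\, a_{2n} < 2\varepsilon$ for all $n \ge N$. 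This shows that the subsequence $\{2n\,a_{2n}\}$ tends to $0$.

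To upgrade this to the full sequence $\{k\,a_k\}$, I would handle odd indices separately using monotonicity once more. For $k = 2n+1$, condition (3) gives $a_{2n+1} \le a_{2n}$, hence
\begin{equation*}
(2n+1)\,a_{2n+1} \;\le\; (2n+1)\,a_{2n} \;=\; 2n\,a_{2n} + a_{2n},
\end{equation*}
and the right-hand side tends to $0$ since $2n\,a_{2n}\to 0$ (already shown) and $a_{2n}\to 0$ (which follows from (1), (2) alone). Together with $2n\,a_{2n}\to 0$, this yields $k\,a_k \to 0$, i.e.\ $a_k = o(1/k)$.

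I expect no serious obstacle here: the argument is purely real-analytic and uses each of the three hypotheses in an essential but straightforward way. The only point that requires minor care is the parity split between even and odd indices, which is dispatched by a single application of monotonicity. The same tail-block inequality is in fact the standard route to the classical fact that terms of a convergent monotone non-negative series decay faster than $1/k$, and no sharper tool (e.g.\ Kronecker's lemma) is needed.
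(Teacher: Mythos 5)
Your argument is correct: the Cauchy tail bound $n\,a_{2n}\le\sum_{k=n+1}^{2n}a_k<\varepsilon$ combined with monotonicity for the odd-index case gives $k\,a_k\to 0$ with no gaps. Note that the paper itself does not prove this lemma at all --- it is quoted directly from Deng, Lai, Peng and Yin --- so there is no in-paper proof to compare against; your tail-block argument is the standard one (essentially the proof in that reference) and would serve as a self-contained justification here.
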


Inspired by the above lemma, the key of our analysis is the monotonicity of sequence $\|u^{k}-u^{k+1}\|_{\overline{H}}^{2}+\frac{1}{\rho}\|p^{k}-p^{k+1}\|^{2}$, which is proved in
the following lemma.
\begin{lemma}\label{lemma:5}
\noindent Suppose the sequence $\{u^k, p^k\}$ is generated by VAPP-AL, $G(\cdot)$ is twice differentiable convex function. If $\underline{H}\succeq 0$ and $0<\rho\leq(1+\delta)\gamma\leq2\gamma$, where $0<\delta\leq 1$, then it holds that
\begin{eqnarray}
\|u^{k}-u^{k+1}\|_{\overline{H}}^{2}+\frac{1}{\rho}\|p^{k}-p^{k+1}\|^{2}\leq \|u^{k-1}-u^{k}\|_{\overline{H}}^{2}+\frac{1}{\rho}\|p^{k-1}-p^{k}\|^{2}
\end{eqnarray}
\end{lemma}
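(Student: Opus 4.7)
The plan is to exploit the quadratic structure of $K^k(u)$ so that the first-order optimality condition of~\eqref{primal} becomes an affine VI in the fixed matrix $H$, then subtract the VIs written at two consecutive iterations, a standard device for proximal-Jacobian ADMM analysis. First I would note that the Hessian of $K^k$ is the $k$-independent block-diagonal matrix $\epsilon H$, so $\nabla K^k(u^{k+1})-\nabla K^k(u^k)=\epsilon H(u^{k+1}-u^k)$, and dividing~\eqref{eq:vi} by $\epsilon$ the optimality condition reads
\begin{equation*}
\langle H(u^{k+1}-u^k)+\nabla G(u^k)+A^{\top}(p^k+\gamma(Au^k-b)),\,u-u^{k+1}\rangle+J(u)-J(u^{k+1})\geq 0,\quad\forall u\in U.
\end{equation*}
Writing this inequality at step $k$ with the test point $u=u^k$ and at step $k-1$ with the test point $u=u^{k+1}$ and adding, the $J$-terms cancel by convexity. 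Using the integral representation $\nabla G(u^k)-\nabla G(u^{k-1})=Q^k(u^k-u^{k-1})$ and the identity $\gamma A^{\top}A=H-\overline H$, the cross-terms regroup into $-\langle\tilde H^k(u^k-u^{k-1}),u^k-u^{k+1}\rangle$, leaving
\begin{equation*}
\|u^{k+1}-u^k\|_H^2\;\leq\;-\langle\tilde H^k(u^k-u^{k-1}),u^k-u^{k+1}\rangle+\langle p^k-p^{k-1},A(u^k-u^{k+1})\rangle.
\end{equation*}

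Next I would invoke the dual update twice to obtain the telescoping identity $A(u^{k+1}-u^k)=\tfrac{1}{\rho}(p^{k+1}-2p^k+p^{k-1})$, which simultaneously expands $\|u^{k+1}-u^k\|_H^2=\|u^{k+1}-u^k\|_{\overline H}^2+\tfrac{\gamma}{\rho^2}\|p^{k+1}-2p^k+p^{k-1}\|^2$ and rewrites the dual inner product via the polarization identity
\begin{equation*}
-\tfrac{1}{\rho}\langle p^k-p^{k-1},p^{k+1}-p^k\rangle=\tfrac{1}{2\rho}\bigl(\|p^{k+1}-2p^k+p^{k-1}\|^2-\|p^{k+1}-p^k\|^2-\|p^k-p^{k-1}\|^2\bigr).
\end{equation*}
Applying Young's inequality to the $\tilde H^k$ cross-term and using $0\preceq\tilde H^k\preceq\overline H$ (which follows from the standing hypothesis $\underline H\succeq 0$ together with $0\preceq Q^k\preceq B_G I$), I expect the inequality to reduce, after collecting terms and multiplying by $2$, to
\begin{equation*}
\|u^{k+1}-u^k\|_{\overline H}^2+\tfrac{1}{\rho}\|p^{k+1}-p^k\|^2+\tfrac{2\gamma-\rho}{\rho^2}\|p^{k+1}-2p^k+p^{k-1}\|^2\;\leq\;\|u^k-u^{k-1}\|_{\overline H}^2+\tfrac{1}{\rho}\|p^k-p^{k-1}\|^2.
\end{equation*}
The hypothesis $\rho\leq(1+\delta)\gamma\leq 2\gamma$ makes the coefficient $2\gamma-\rho$ nonnegative, so dropping the third term on the left yields the claimed monotonicity.

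The main obstacle I anticipate is the bookkeeping of signs in aggregating three separate sources of $p$-increment norms: the $\gamma A^{\top}A$ piece hidden inside $\|u^{k+1}-u^k\|_H^2$, the polarization identity applied to the dual cross-term, and the remainder of Young's inequality. Showing that the net coefficient $(2\gamma-\rho)/\rho^2$ of $\|p^{k+1}-2p^k+p^{k-1}\|^2$ ends up nonnegative is exactly what forces the hypothesis $\rho\leq 2\gamma$. A secondary delicacy is the Young's step on the $\tilde H^k$ cross-term: the splitting $\langle\tilde H^k a,b\rangle\leq\tfrac{1}{2}\|a\|_{\tilde H^k}^2+\tfrac{1}{2}\|b\|_{\tilde H^k}^2$ is only useful because $\tilde H^k\preceq\overline H$ allows us to uniformly majorize the resulting semi-norms by $\overline H$-norms, matching the left-hand side of the target inequality; this uniform upper bound is precisely why the conclusion is phrased in the $\overline H$-seminorm rather than in any iteration-dependent metric.
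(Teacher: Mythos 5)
Your proposal is correct and follows essentially the same route as the paper's proof: subtract the optimality conditions of two consecutive subproblems (cancelling the $J$-terms by monotonicity), bound the cross term $\langle \tilde H^k \Delta u^k,\Delta u^{k+1}\rangle$ via Young's inequality using $0\preceq\tilde H^k\preceq\overline H$, and invoke the dual update so that the leftover $-(2\gamma-\rho)\|A(u^{k+1}-u^k)\|^2$ is nonpositive. The only difference is cosmetic bookkeeping — you rewrite $\|A(u^{k+1}-u^k)\|^2$ as $\rho^{-2}\|p^{k+1}-2p^k+p^{k-1}\|^2$ while the paper keeps it in the $A^{\top}A$ seminorm — and both yield the same monotonicity with the same sign condition $\rho\leq 2\gamma$.
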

\begin{proof}  To simplify the notation, we let
$$
\Delta u^{k+1}=u^{k}-u^{k+1}\;\mbox{and}\;\Delta p^{k+1}=p^{k}-p^{k+1}.
$$
Consequently, $\Delta u_{i}^{k+1}=u_{i}^{k}-u_{i}^{k+1}$, $i=1,\ldots,N$. Recall that the subproblem of VAPP-AL is given by~\eqref{eq:APk}
and from the optimality condition it follows that for $i=1,\ldots,N$ there exits a point $y_i^{k+1} \in \partial J_{i}(u_{i}^{k+1})$ satisfying
\begin{eqnarray*}
\left( \frac{1}{\epsilon}\left(\nabla_{u_i} K^{k}(u^{k+1})- \nabla_{u_i} K^{k}(u^{k}) \right) + \nabla_{u_{i}} G(u^{k}) + A_{i}^{T}\left( p^{k}+\gamma(Au^{k}-b)\right) + y_i^{k+1} \right)^{T}(u_i - u_i^{k+1}) \ge 0,\;\forall \; u_i \in U_i.
\end{eqnarray*}
Then for any $i=1,\ldots,N$ and $u_i \in U_i$,
plugging the gradient of the core function that defined in~\eqref{eq:ProximalJacobicore} into the above formula yields
\begin{eqnarray}\label{optimality-formula-k}
 \left( - \frac{\theta_i}{\epsilon}A_{i}^{T}A_{i}\Delta u_{i}^{k+1} - \frac{\alpha_i}{\epsilon}P_{i}^{T}P_{i}\Delta u_{i}^{k+1} + \nabla_{u_{i}} G(u^{k}) + A_{i}^{T}\left(p^{k}+\gamma(Au^{k}-b)\right)+ y_i^{k+1} \right)^{T}  (u_i - u_i^{k+1}) \ge 0.
\end{eqnarray}
Repeating the above argument for the $k-1$-th iteration gives that
\begin{eqnarray}\label{optimality-formula-k-1}
 \left( - \frac{\theta_i}{\epsilon}A_{i}^{T}A_{i}\Delta u_{i}^{k} - \frac{\alpha_i}{\epsilon}P_{i}^{T}P_{i}\Delta u_{i}^{k} + \nabla_{u_{i}} G(u^{k-1}) + A_{i}^{T}\left(p^{k-1}+\gamma(Au^{k-1}-b)\right)+ y_i^{k} \right)^{T}  (u_i - u_i^{k}) \ge 0.
\end{eqnarray}

Moreover, since $J(\cdot)$ is convex, for any $y_i^k \in \partial J_{i}(u_{i}^{k})$ and $y_i^{k+1} \in \partial J_{i}(u_{i}^{k+1})$
$$
\langle y_i^{k} - y_i^{k+1}, u_{i}^{k}-u_{i}^{k+1}\rangle\geq 0.
$$
Therefore, by taking $u_i = u^k_i$ in~\eqref{optimality-formula-k} and $u_i = u^{k+1}_i$ in~\eqref{optimality-formula-k-1}, it holds that
\begin{eqnarray*}
0 & \le &\sum_{i=1}^{N}\langle y_i^k -y_i^{k+1}, u_{i}^{k}-u_{i}^{k+1}\rangle\\
&\le & \sum_{i=1}^{N}\frac{\theta_{i}}{\epsilon}\langle A_{i}(\Delta u_{i}^{k}-\Delta u_{i}^{k+1}), A_{i}\Delta u_{i}^{k+1}\rangle + \sum_{i=1}^{N}\frac{\alpha_i}{\epsilon}\langle P_{i}(\Delta u_{i}^{k}-\Delta u_{i}^{k+1}), P_{i}\Delta u_{i}^{k+1}\rangle \\
&& -\langle \nabla G(u^{k-1})- \nabla G(u^{k}), \Delta u^{k+1}\rangle - \langle \Delta p^{k}, A\Delta u^{k+1}\rangle -\gamma \langle A\Delta u^{k}, A\Delta u^{k+1}\rangle \\
&=&-\|\Delta u^{k+1}\|_{H}^{2}+(\Delta u^{k})^{T} \tilde{H}^{k} \Delta u^{k+1} -\langle \Delta p^{k},A\Delta u^{k+1}\rangle,\\
\end{eqnarray*}
where the equality follows as
$$ \nabla G(u^{k-1})- \nabla G(u^{k})= \int_{0}^{1}d\, \nabla G(u^{k}+\tau(u^{k-1}-u^{k})) = \int_{0}^{1}\nabla^2G(u^{k}+\tau(u^{k-1}-u^{k}))d\tau(u^{k-1}-u^{k}).$$
Since $\overline{H} \succeq  \tilde{H}^{k} \succeq \underline{H} \succeq 0$, the above inequality implies that
\begin{eqnarray}\label{eq:Adeltau}
2\langle \Delta p^{k},A\Delta u^{k+1}\rangle &\leq& -2\|\Delta u^{k+1}\|_{H}^{2}+2(\Delta u^{k})^{T} \tilde{H}^{k} \Delta u^{k+1} \nonumber\\
                                            &\leq& -2\|\Delta u^{k+1}\|_{H}^{2}+\|\Delta u^{k}\|_{\tilde{H}^{k}}^{2}+\|\Delta u^{k+1}\|_{\tilde{H}^{k}}^{2}\nonumber \\
                                            &\le& -2\|\Delta u^{k+1}\|_{H}^{2}+\|\Delta u^{k}\|_{\overline{H}}^{2}+\|\Delta u^{k+1}\|_{\overline{H}}^{2}
\end{eqnarray}
Observing $\Delta p^{k+1}=\Delta p^{k} + \rho A\Delta u^{k+1}$ is the dual update of VAPP-AL, it follows that
\begin{eqnarray*}
\frac{1}{\rho}\|\Delta p^{k+1}\|^{2}-\frac{1}{\rho}\|\Delta p^{k}\|^{2}&=&2\langle \Delta p^{k},A\Delta u^{k+1}\rangle+\rho \|A\Delta u^{k+1}\|^{2}\\
&\leq& -2\|\Delta u^{k+1}\|_{H}^{2}+\|\Delta u^{k}\|_{\overline{H}}^{2}+\|\Delta u^{k+1}\|_{\overline{H}}^{2}+\rho \|A\Delta u^{k+1}\|^{2}.
\end{eqnarray*}
As a result,
\begin{eqnarray*}
&&\left( \|\Delta u^{k+1}\|_{\overline{H}}^{2}+\frac{1}{\rho}\|\Delta p^{k+1}\|^{2}\right) - \left(\|\Delta u^{k}\|_{\overline{H}}^{2}+\frac{1}{\rho}\|\Delta p^{k}\|^{2}\right)\\
&\leq& -2\|\Delta u^{k+1}\|_{H}^{2}+2\|\Delta u^{k+1}\|_{\overline{H}}^{2}+\rho \|A\Delta u^{k+1}\|^{2}\\
&=& -\|\Delta u^{k+1}\|_{(2\gamma-\rho)A^{\top}A}^{2}\\
&\leq& 0,
\end{eqnarray*}
the last inequality is due to the choice of $\rho$ in~\eqref{para-choice} and the proof is complete.
\end{proof}

With those preparations in hand, the convergence rate of our algorithm readily follows.
\begin{theorem}\label{thm:convergence-rate}
\noindent Suppose Assumption \ref{assump1} holds, the core function is defined in~\eqref{eq:ProximalJacobicore}, and
for each index $i=1,\dots, N$, we either have $A_{i}$ has full column rank with $\theta_i > 0$ or $P_{i}$ has full column rank with $\alpha_i >0$.
Moreover, assume that $\underline{H}\succeq 0$ and
 $G(\cdot)$ is convex, twice differentiable and its hessian matrix is bounded. Suppose the sequence $\{u^k,p^k \}$ is generated by VAPP-AL and the values of the parameters $\rho$ and $\epsilon$
are chosen based on~\eqref{para-choice}. Then
 it follows that for any integer $t>0$ we have $\|u^{t}-u^{t+1}\|_{\overline{H}}^{2}=o(1/t)$ and $\|p^{t}-p^{t+1}\|^{2}=o(1/t)$.
\end{theorem}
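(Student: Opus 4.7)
The plan is to package the two quantities of interest into a single scalar sequence
\[ a_k := \|u^k - u^{k+1}\|_{\overline{H}}^2 + \tfrac{1}{\rho}\|p^k - p^{k+1}\|^2, \]
and then invoke Lemma \ref{lemma:4}. Nonnegativity of $a_k$ is automatic because $\overline{H} \succeq \underline{H} \succeq 0$ by assumption, so $\|\cdot\|_{\overline{H}}$ is a bona fide semi-norm. Monotonic non-increase of $\{a_k\}$ is exactly the conclusion of Lemma \ref{lemma:5}, whose hypotheses are satisfied here: $\underline{H}\succeq 0$ is a standing assumption, and the parameter choice \eqref{para-choice} delivers $0<\rho<(1+\delta)\gamma\leq 2\gamma$ with $\delta\in(0,1]$, which is precisely what Lemma \ref{lemma:5} requires.

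The remaining ingredient is summability $\sum_{k=0}^{\infty} a_k < \infty$. For this I would reuse the estimate obtained inside the proof of Theorem \ref{theo:convergence-rate}, namely
\[ \sum_{k=0}^{\infty} \nu\,\bigl(\|u^k - u^{k+1}\|^2 + \|p^k - p^{k+1}\|^2\bigr) \leq \Lambda^0(u^0,p^0)<\infty, \]
which holds under exactly the parameter choice \eqref{para-choice} we are already assuming. Since $\overline{H}$ is bounded, $\|u^k-u^{k+1}\|_{\overline{H}}^2 \leq \lambda_{\max}(\overline{H})\|u^k-u^{k+1}\|^2$, and $1/\rho$ is a finite constant, this estimate immediately implies $\sum_k a_k < \infty$. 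Applying Lemma \ref{lemma:4} then yields $a_k = o(1/k)$. Because $a_k$ is a sum of two nonnegative terms, each term must individually be $o(1/k)$, giving $\|u^t-u^{t+1}\|_{\overline{H}}^2 = o(1/t)$ and $\|p^t-p^{t+1}\|^2 = o(1/t)$.

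The argument is mostly a matter of assembling pieces already in place; the only subtlety, rather than a real obstacle, is ensuring the hypotheses of the two lemmas and of Theorem \ref{theo:convergence-rate} are simultaneously consistent with the hypotheses of the current theorem. This is handled by Proposition \ref{prop:choice-core}: the column-rank condition on $A_i$ or $P_i$ (with the corresponding $\theta_i>0$ or $\alpha_i>0$) supplies Assumption \ref{assump2}, and Proposition \ref{prop:choice-core}(ii) confirms that the parameter choice \eqref{para-choice} is compatible with (in fact strictly stronger than) $\underline{H}\succeq 0$. Thus Lemma \ref{lemma:5}, Theorem \ref{theo:convergence-rate}, and Lemma \ref{lemma:4} all apply in tandem, and the chained application produces the claimed $o(1/t)$ rates.
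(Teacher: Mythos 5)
Your proposal is correct and follows essentially the same route as the paper: summability of $a_k=\|u^k-u^{k+1}\|_{\overline{H}}^2+\tfrac{1}{\rho}\|p^k-p^{k+1}\|^2$ from the telescoping descent of $\Lambda^k$, monotonicity from Lemma~\ref{lemma:5}, and then Lemma~\ref{lemma:4}. The only (harmless) difference is that you import the Euclidean-norm summability already proved in Theorem~\ref{theo:convergence-rate} and convert it via $\|x\|_{\overline{H}}^2\le\lambda_{\max}(\overline{H})\|x\|^2$, whereas the paper re-derives the descent inequality directly in the $\overline{H}$-seminorm by choosing $\eta_1$ with $\tfrac{1}{2}\bigl(\epsilon(B_G+\gamma\lambda_{\max}(A^{\top}A))-\beta^k\bigr)I\preceq-\eta_1\overline{H}$.
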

\begin{proof}
\noindent Due to the previous discussion and the choice of $\epsilon$, $\rho$, Theorem~\ref{theo:general-convergence} follows. Therefore,
\begin{eqnarray*}
    && \Lambda^{k+1}(u^{k+1},p^{k+1})-\Lambda^k(u^{k},p^{k})\nonumber\\
&\leq&  \frac{1}{2}\left(\epsilon\left(B_{G}+\gamma \lambda_{max}(A^{\top}A)\right)-\beta^k \right)\|u^{k}-u^{k+1}\|^{2} +\frac{ \epsilon}{2}(\rho-(1+\delta) \gamma )\|Au^{k+1}-b\|^{2},
\end{eqnarray*}
and $\lim\limits_{k \to \infty} \Lambda^k(u^{k},p^{k}) = 0$. Since $\frac{1}{2}\left(\epsilon\left(B_{G}+\gamma \lambda_{max}(A^{\top}A)\right)-\beta^k \right) \le 0$ and $\frac{ \epsilon}{2}(\rho-(1+\delta)\gamma )\le 0$, there exist positive numbers $\eta_1$ and $\eta_2$ such that
$$
\frac{1}{2}\left(\epsilon\left(B_{G}+\gamma \lambda_{max}(A^{\top}A)\right)-\beta^k \right) I \preceq - \eta_1 \overline{H}\quad\mbox{and}\quad\frac{ \epsilon}{2}(\rho-(1+\delta) \gamma )I \le - \frac{\eta_2}{\rho} I.
$$
By letting $\eta = \min\{\eta_1, \eta_2\}$, one has that
\begin{equation}\label{eq:sum}
\Lambda^{k+1}(u^{k+1},p^{k+1})-\Lambda^k(u^{k},p^{k}) \le -\eta\left( \|u^{k}-u^{k+1} \|_{\overline{H}}^{2}+\frac{1}{\rho}\|p^{k}-p^{k+1}\|^{2} \right)
\end{equation}
Summing (\ref{eq:sum}) over $k$ and taking limit yields
\begin{eqnarray*}
\sum_{k=1}^{\infty}\left( \|u^{k}-u^{k+1}\|_{\overline{H}}^{2}+\frac{1}{\rho}\|p^{k}-p^{k+1}\|^{2} \right)\le \frac{1}{\eta} \Lambda^0(u^{0},p^{0})<\infty.
\end{eqnarray*}
\noindent Moreover Lemma \ref{lemma:5} implies that the sequence $\{ \|u^{k}-u^{k+1}\|_{\overline{H}}^{2}+\frac{1}{\rho}\|p^{k}-p^{k+1}\|^{2} \}$ is non-increasing.
Finally, the conclusion follows by combining those results with Lemma \ref{lemma:4}.
\end{proof}

\section{Conclusion}
In this paper, we propose a new VAPP-AL framework that can allow the core function of standard APP to be different at each iteration.
We illustrate that how some known algorithms (especially some variants of ADMM) can be considered as specializations and solved by our framework.
We prove that under some mild conditions,
%if the core function in current step does not deviate too much from that in last iteration,
the solution sequence generated by VAPP-AL converges.
Moreover the the convergence rate of $O(1/t)$ are provided in both ergodic and non-ergodic sence.
When the core function is specialized to be quadratic, we show that an even lower iteration complexity of $o(1/t)$ can be achieved. In addition,
our framework can handle a convex problem with nonseparable
objective and multi-blocks coupled linear constraints.

\end{document}